\newcommand{\rbrac}[1]{\left(#1\right)}
\newcommand{\sbrac}[1]{\left[ #1\right]}
\newcommand{\St}[1]{\rbrac{\hspace{-0.05cm}\frac{#1}{e}\hspace{-0.05cm}}^{#1}}
\newcommand{\GG}[2]{g\hspace{-0.08cm}\rbrac{#1}#2\rbrac{\log #1 + 1}}
\newcommand{\type}{\text{type}}
\newcommand{\G}{\mathcal{G}}
\newcommand{\Gnd}{\G_{n,d}}
\newtheorem{theorem}{Theorem}[section]
\newtheorem{prop}[theorem]{Proposition}
\newtheorem{lemma}[theorem]{Lemma}
\title{Zero Forcing Number of Random Regular Graphs}
\author{
Deepak Bal\thanks{Department of Mathematical Sciences, Montclair State University, Montclair, NJ, USA, email: \texttt{deepak.bal@montclair.edu}}
\and
Patrick Bennett\thanks{Department of Mathematics, Western Michigan University, Kalamazoo, MI, USA, e-mail: \texttt{Patrick.Bennett@wmich.edu}}
\and
Sean English\thanks{Department of Mathematics, Ryerson University, Toronto, ON, Canada, e-mail: \texttt{Sean.English@ryerson.ca}}
\and 
Calum MacRury\thanks{Department of Computer Science, University of Toronto, Toronto, ON, Canada, email : \texttt{cmacrury@cs.toronto.edu}}
\and
Pawe\l{} Pra\l{}at\thanks{Department of Mathematics, Ryerson University, Toronto, ON, Canada, e-mail: \texttt{pralat@ryerson.ca}}
}
\begin{document}

\maketitle

\begin{abstract}
The zero forcing process is an iterative graph colouring process in which at each time step a coloured vertex with a single uncoloured neighbour can force this neighbour to become coloured. A zero forcing set of a graph is an initial set of coloured vertices that can eventually force the entire graph to be coloured. The zero forcing number is the size of the smallest zero forcing set. We explore the zero forcing number for random regular graphs, improving on bounds given by Kalinowski, Kam\u{c}ev and Sudakov~\cite{Benny}. We also propose and analyze a degree-greedy algorithm for finding small zero forcing sets using the differential equations method.
\end{abstract}

\section{Introduction}

Let $G=(V,E)$ be any simple, undirected graph. The \textbf{zero forcing process} on a graph $G$ is defined as follows. We start with a subset $S$ of vertices and colour them black; all other vertices are coloured white. At each time step of the process, a black vertex, $v$, with exactly one white neighbour, $w$, will force its white neighbour to become black. In this case we say that \textbf{$v$ forced $w$}. The process stops when no more white vertices can be forced to become black. A \textbf{chronological list of forces} for  a set of black vertices $S$ is a sequence of pairs $(v_1,w_1),\dots,(v_k,w_k)$ such that initially $v_1$ can force $w_1$, and iteratively at the $i$th time step, assuming the that $v_j$ has forced $w_j$ for all $1\leq j< i$, $v_i$ can force $w_i$. It is easy to see that, given an initial set $S$ of black vertices, the set of black vertices obtained by applying the colour-change rule until no more changes are possible is unique, regardless of the order that the forcing is done. Indeed, it follows immediately from the property that once a white vertex is ready to become black it stays at this state until it actually becomes black. Therefore, the following definition is natural. The set $S$ is said to be a \textbf{zero forcing set} if at the end of the zero forcing process all vertices of $V$ become black. The \textbf{zero forcing number} of $G$ is the minimum cardinality of a zero forcing set in $G$, and is denoted by $Z(G)$. This graph parameter is well defined as, trivially, $V$ is a zero forcing set and so $Z(G) \le n$.

The concept of zero forcing was first proposed in~\cite{AIM_2008} as a bound for the minimum rank problem. Given a $n\times n$ symmetric matrix $A=[a_{i,j}]$ over some field $\mathbf{F}$, the graph of $A$, denoted $\mathcal{G}(A)$ is the graph on vertex set $V=\{v_1,\dots,v_n\}$ with edge set $E=\{v_iv_j\mid i\neq j,\,a_{i,j}\neq 0\}$. Note that the graph of a matrix does not depend on the main diagonal of the matrix. The minimum rank of a graph $G$, denoted $\mathrm{mr}^{\mathbf{F}}(G)$ is the minimum rank over all symmetric matrices $A$ over $\mathbf{F}$ such that $\mathcal{G}(A)=G$. The minimum rank problem for graphs has been extensively studied both from the lens of linear algebra and combinatorics. For a survey on the subject, see~\cite{Handbook_2014}. In~\cite{AIM_2008}, it was shown that $Z(G)\geq n-\mathrm{mr}^{\mathbf{F}}(G)$ for any field $\mathbf{F}$. Zero forcing numbers have provided very good bounds for the minimum rank problem, and so have recently been a topic of extensive study. 

In addition to the application to the minimum rank problem, zero forcing sets have been of interest in their own right. Many facets and generalizations of zero forcing have been studied. Some of note are the propagation time, which is defined to be the minimum amount of time for a minimum zero forcing set to force the entire graph, where at each time step all possible forces happen simultaneously~\cite{Hogben_2012}, positive semi-definite zero forcing, a variant which bounds the minimum rank problem when the minimum is taken only over positive semi-definite matrices with graph $G$~\cite{Ekstrand}, and the zero forcing polynomial, which algebraically encodes information about the number of zero forcing sets of a given size in a graph~\cite{Boyer}.

In this paper, we will explore zero forcing for random regular graphs, or more precisely, we consider the probability space of \textbf{random $d$-regular graphs} with uniform probability distribution. This space is denoted $\Gnd$, and asymptotics are for $n\to\infty$ with $d\ge 2$ fixed, and $n$ even if $d$ is odd. We say that an event in a probability space holds \textbf{asymptotically almost surely} (or \textbf{a.a.s.}) if the probability that it holds tends to $1$ as $n$ goes to infinity. Since we aim for results that hold a.a.s., we will always assume that $n$ is large enough. All logarithms which follow, unless otherwise stated, are natural logarithms.

The case of binomial random graphs was studied by Kalinowski, Kam\u{c}ev and Sudakov~\cite{Benny}, who proved that the zero forcing number of $G(n, p)$ is a.a.s. $n - (2 + \sqrt{2} + o(1)) \cdot \frac{\log np}{p}$ for a wide range of $p$. In addition, this group gave bounds involving the spectral properties of regular graphs. These results applied to the random regular graph implies the following theorem.
\begin{theorem}[\cite{Benny}]
Let $d\geq 3$ be fixed. Then a.a.s.
\[
n\left(1-\frac{40\log d}{d}\right)\leq Z(\Gnd)\leq n\left(1-\frac{\log d}{4d}\right).
\]
\end{theorem}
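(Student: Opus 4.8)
The plan is to reduce the two inequalities to statements about \emph{forts} and then to feed in the known expansion of $\Gnd$. Call a nonempty set $F\subseteq V$ a \emph{fort} if every vertex outside $F$ has either zero or at least two neighbours in $F$. A short argument of exactly the type used in the excerpt (to show the closure is well defined) gives that a white set $W_0$ is forced completely, starting from $S=V\setminus W_0$, if and only if $W_0$ contains no fort: a fort contained in the white set can never be forced, and conversely the set of never-forced vertices, if nonempty, is itself a fort. Hence $n-Z(G)$ is exactly the maximum size of a fort-free set, and the theorem splits into (upper bound on $Z$) the existence of a fort-free set of size at least $\tfrac{n\log d}{4d}$, and (lower bound on $Z$) the a.a.s.\ statement that every set of size at least $\tfrac{40\,n\log d}{d}$ contains a fort.

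For the upper bound I would build a large fort-free set by the \emph{private neighbour} method. Put each vertex into a candidate white set $W$ independently with probability $q=(1-\varepsilon)\tfrac{\log d}{d}$, and call $v\notin W$ a private neighbour of $w\in W$ if $N(v)\cap W=\{w\}$. If every $w\in W$ has a private neighbour then each such $v$ has exactly one white neighbour and forces it, so all of $W$ is forced in a single round and $W$ is fort-free. For a fixed neighbour $v$ of $w$ the probability that $v$ is private is about $(1-q)^d\approx d^{-(1-\varepsilon)}$, so the probability that $w$ has \emph{no} private neighbour among its $d$ neighbours is about $\big(1-d^{-(1-\varepsilon)}\big)^d\approx \exp(-d^{\varepsilon})=o(1)$. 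Thus almost every vertex of $W$ has a private neighbour; moving the negligible exceptional set into the black set leaves a fort-free set of size $(1-o(1))qn\ge \tfrac{n\log d}{4d}$. To make this rigorous on $\Gnd$ I would run the computation in the configuration model, or estimate the relevant neighbourhood counts through the expander mixing lemma using the a.a.s.\ bound $\lambda(\Gnd)\le 2\sqrt{d-1}+o(1)$, and then transfer the expectation to an a.a.s.\ statement by concentration.

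For the lower bound I would use a first-moment/union-bound argument over all candidate white sets. Fix $w=\lceil \tfrac{40\,n\log d}{d}\rceil$ and a set $W_0$ of this size, and bound the probability that $W_0$ is fort-free (equivalently, forceable) by encoding a complete forcing order as a sequence of (forced vertex, forcer) pairs subject to the unique-white-neighbour constraint and counting the configurations that admit such an order. The guiding heuristic is that a vertex has exactly one neighbour in a set of density $\rho=w/n$ only with probability about $d\rho(1-\rho)^{d-1}\approx 40\log d\cdot d^{-40}$, far too small to sustain forcing across a positive fraction of $W_0$, which should give $\Pr[W_0\ \text{fort-free}]\le \exp(-\Theta(w\log d))$. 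Since the number of sets of this size is at most $(en/w)^w=\exp(O(w\log d))$, the constant $40$ is tuned so that the per-set probability beats the union bound, after which a union bound yields the claim a.a.s. Alternatively, one may simply invoke the general spectral bounds for regular graphs from~\cite{Benny} and substitute $\lambda=2\sqrt{d-1}+o(1)$.

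I expect the lower bound to be the main obstacle: controlling the probability that an \emph{adversarially chosen} large set is forceable requires analysing the full, possibly many-round, forcing process in the configuration model rather than just its first step, and it is precisely the slack between this probability estimate and the $\binom{n}{w}$ union bound that forces the large constant $40$. The upper bound, by contrast, is a comparatively routine concentration argument once the private-neighbour event is set up.
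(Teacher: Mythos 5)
First, a point of orientation: the paper does not prove this statement itself --- it is quoted from \cite{Benny}, where the upper bound comes from a spectral (expander-mixing) analysis of a greedy procedure (the kind of argument this paper redoes, with a better constant, in Section~\ref{section general upper bound}), and the lower bound comes from the observation, restated and improved in Section~\ref{section lower bound}, that a graph with no $q$-bipartite hole satisfies $Z(G)\ge n-2q$, combined with the known fact that a.a.s.\ $\Gnd$ has no $\frac{20n\log d}{d}$-bipartite hole (whence $40=2\cdot 20$). Against that benchmark: your fort reformulation is correct (the stalled white set is itself a fort), and it is an equivalent dual of the witness-sequence machinery of Theorem~\ref{thm:ZGrundy}. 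Your upper bound is also viable and genuinely different from the spectral route: a random white set of density $(1-\varepsilon)\frac{\log d}{d}$ together with private neighbours gives one-round forcing, and the factor-$4$ slack in the target $\frac{\log d}{4d}$ comfortably absorbs the exceptional vertices and the configuration-model corrections; what remains (the pairing-model computation and a concentration argument, say via Azuma) is routine.

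The genuine gap is the lower bound, and it is fatal as proposed, not merely technical. Your certificate for ``$W_0$ is forceable'' is a chronological list of forces, so your union bound runs over (choices of set) $\times$ (orderings of $W_0$) $\times$ (forcer assignments). The ordering factor alone is $w!=\exp\bigl((1-o(1))\,w\log n\bigr)$, because $w=\Theta(n\log d/d)$ is \emph{linear} in $n$; meanwhile, by your own heuristic, each forced vertex buys a probability factor of only about $40\log d\cdot d^{-40}=\exp(-\Theta(\log d))$. The union bound therefore evaluates to $\exp\bigl(w\log n-\Theta(w\log d)\bigr)\to\infty$, and since $\log n\gg\log d$ for fixed $d$, no choice of the constant ($40$ or otherwise) can rescue it. Your accounting ``at most $(en/w)^w=\exp(O(w\log d))$ sets'' silently drops the ordering and forcer factors, which are exactly the problem. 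The fallbacks do not help either: the relevant general spectral fact (the paper's Proposition~\ref{prop:spectral_edge_guarantee}) rules out $q$-bipartite holes only for $q>\frac{\lambda}{d+\lambda}n\approx \frac{2n}{\sqrt d}$, which yields only $Z\ge n\bigl(1-O(1/\sqrt d)\bigr)$, weaker than $n\bigl(1-\frac{40\log d}{d}\bigr)$ for large $d$; and invoking \cite{Benny} wholesale is citing the statement to be proved.

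The missing idea is to replace the \emph{ordered} certificate by a static, unordered one. If $Z(G)\le n-2q$, then by Theorem~\ref{thm:ZGrundy} there is a Z-sequence $(v_1,\dots,v_{2q})$ with witness sequence $(w_1,\dots,w_{2q})$, and since $w_j\notin N[v_i]$ for $i<j$, the sets $\{v_1,\dots,v_q\}$ and $\{w_{q+1},\dots,w_{2q}\}$ are disjoint with no edges between them: a $q$-bipartite hole. Counting these structures costs only $\binom{n}{q}^2=\exp(O(q\log d))$ --- no ordering appears, and $\log(n/q)=O(\log d)$ precisely because $q$ is linear in $n$ --- while the probability that two fixed $q$-sets span no edge is $\exp\bigl(-(1+o(1))dq^2/n\bigr)=\exp\bigl(-\Theta(q\log d)\bigr)$, with the constant in the exponent proportional to $dq/(n\log d)$. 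Taking $q=\frac{20n\log d}{d}$ (or $(2+\varepsilon)\frac{n\log d}{d}$, as in Section~\ref{section lower bound}) makes the first moment vanish and gives $Z\ge n-2q$. This reduction from forcing chronologies to bipartite holes is the entire content of the lower bound, and it is what your proposal is missing.
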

 
Here we improve both the upper and lower bound when $d$ is large.
 
\begin{theorem}
For any fixed $\epsilon>0$, for sufficiently large fixed $d=d(\epsilon)$, we have that a.a.s.
\[
n\left(1-(1+\epsilon)\frac{4\log d}d\right)\leq Z(\Gnd)\leq n\left(1-(1-\epsilon) \frac{\log d}{2d}\right).
\]
\end{theorem}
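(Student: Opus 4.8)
The two bounds are essentially independent, so I would treat them separately.

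\emph{Lower bound.} The plan is to show that a.a.s.\ no set of size smaller than $n\bigl(1-(1+\epsilon)\tfrac{4\log d}{d}\bigr)$ is a zero forcing set, via a first moment argument. First I would use the standard fact that the family of zero forcing sets is upward closed, so that it suffices to rule out forcing sets of a single size; equivalently, writing $k=\lceil (1+\epsilon)\tfrac{4\log d}{d}\,n\rceil$ for the number of initially white vertices, I would show that a.a.s.\ there is no white set $W$ with $|W|=k$ whose complement forces. The key structural observation is that if $W$ is forceable then there is an ordering $w_1,\dots,w_k$ of $W$ (the order in which the vertices are forced) together with distinct \emph{forcers} $v_1,\dots,v_k$ such that $v_i$ is adjacent to $w_i$ but to none of $w_{i+1},\dots,w_k$, and $v_i\notin\{w_i,\dots,w_k\}$. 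These non-adjacency constraints are what make a large $W$ unlikely to be forceable: in the configuration model each would-be adjacency is present with probability $\approx d/n$, so for a fixed ordering and fixed forcers the probability that all $k$ required edges are present while the $\approx\binom{k}{2}$ forbidden edges are absent is roughly $(d/n)^k e^{-dk^2/(2n)}$. I would then run the union bound over $W$, the forcers, and the ordering, balancing the entropy of these choices (dominated by $\log\binom{n}{k}\approx k\log d$ together with the forcer choices) against the $e^{-dk^2/(2n)}=e^{-\Theta(dc^2 n)}$ savings from the forbidden edges, where $c=k/n$. The exponent turns negative once $c$ is of order $\tfrac{\log d}{d}$, and optimizing the constant is what produces the factor $4$.

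The step I expect to be the main obstacle is controlling this first moment without the naive loss coming from summing over all $k!$ orderings, which on its own would swamp the $e^{-\Theta(dc^2 n)}$ gain. To handle the ordering I would reformulate forceability as acyclicity of a \emph{forcing digraph} on $W$ (put an arc $w\to w'$ whenever $w'$ must be coloured before $w$, namely when the chosen forcer of $w$ is, or is adjacent to, $w'$): a valid forcing order exists precisely when this digraph is acyclic, so the real task is to bound the probability that this digraph --- whose expected number of arcs is $\Theta(dc\cdot k)=\Theta(dc^2 n)$, hence dense once $dc\gg 1$ --- is a DAG. Obtaining an honest upper bound on this acyclicity probability in the configuration model, rather than the useless union-over-orderings bound, is the technical heart of the lower bound and is where the constant must be extracted carefully.

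\emph{Upper bound.} For the upper bound I would exhibit and analyze a degree-greedy algorithm that constructs a zero forcing set, run on the configuration model with edges exposed by the principle of deferred decisions. The algorithm repeatedly performs every available free force (a black vertex with a unique white neighbour colours that neighbour) and, whenever the process stalls, adds to the black set a white vertex of currently smallest white-degree, since such vertices are closest to creating a black vertex with exactly one white neighbour and hence to triggering further free forces. I would track the process with the differential equations method, the state being the numbers $Y_j$ of white vertices having exactly $j$ already-coloured neighbours for $0\le j\le d$; writing down the expected one-step changes of the $Y_j$ gives a system of ODEs whose solution determines the total number of seeds, that is, $Z(\Gnd)$. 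Applying Wormald's differential equations theorem transfers the trajectory to an a.a.s.\ statement, and reading off the leading asymptotics of the solution for large $d$ should give a zero forcing set of size $\le n\bigl(1-(1-\epsilon)\tfrac{\log d}{2d}\bigr)$, i.e.\ a fraction $\approx\tfrac{\log d}{2d}$ of the vertices forced for free. Here the main obstacle is the bookkeeping and concentration for a $d$-dimensional system: verifying the boundedness and Lipschitz hypotheses of Wormald's theorem uniformly, and then asymptotically solving (or adequately bounding) the ODEs so as to isolate the clean constant $\tfrac{\log d}{2d}$.
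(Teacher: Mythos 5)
Your proposal identifies the right structural facts on both sides, but each half stalls at exactly the step you flag as the ``technical heart,'' and in both cases the paper gets around that step by a different route, so as written there are genuine gaps.

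\emph{Lower bound.} Your observation that a chronological list of forces yields forcers $v_1,\dots,v_k$ and forced vertices $w_1,\dots,w_k$ with $v_i\sim w_i$ and $v_i\not\sim w_j$ for $i<j$ is the correct starting point, and your entropy-versus-gain heuristic even produces the right constant. But the ordering problem you identify is fatal, not technical: with $k=\Theta(n\log d/d)$, summing over orderings costs $k!=e^{\Theta(k\log k)}=e^{\Theta(n\log n\cdot \log d/d)}$, which grows faster than any $e^{-\Theta(n\log^2 d/d)}$ gain, and your proposed repair --- bounding the probability that a ``forcing digraph'' on $W$ is acyclic in the configuration model --- is left completely open; no method is given, and dependencies in the pairing model make such acyclicity estimates genuinely hard (the paper's concluding remarks note that even the second-moment ``$k$-witness'' computations that work in $G(n,p)$ become intractable here). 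The paper sidesteps the ordering entirely with one observation you are missing: taking $k=2q$, the \emph{first half of the forcers} $\{v_1,\dots,v_q\}$ and the \emph{second half of the forced vertices} $\{w_{q+1},\dots,w_{2q}\}$ are disjoint $q$-sets with \emph{no} edges between them (since $i\le q<j$ forces $v_i\not\sim w_j$), i.e.\ a $q$-bipartite hole. Hence $Z(G)\ge n-2q$ whenever $G$ has no $q$-bipartite hole, and ruling out bipartite holes is an ordering-free first moment computation: one counts pairs of disjoint $(an)$-sets with no edges between them in the pairing model, keeping the number $bn$ of internal edges of one side as a parameter and optimizing over $b$, to show that a.a.s.\ there is no hole with $a=(1+\epsilon)\frac{2\log d}{d}$; the factor $4$ in the theorem is then $2\times 2$ from $n-2q$.

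\emph{Upper bound.} Your plan (degree-greedy forcing plus Wormald's differential equations method) is what the paper uses only for the \emph{numerical} bounds at $d\le 14$; it is not how the explicit $\frac{\log d}{2d}$ bound is proved, and the step you gloss over --- ``asymptotically solving (or adequately bounding) the ODEs'' --- is precisely what nobody knows how to do: the system is $d$-dimensional, the process passes through $\Theta(d)$ phases whose boundaries are themselves defined implicitly, and the paper never extracts large-$d$ asymptotics from it. Instead, the paper analyzes the same greedy algorithm \emph{deterministically} on any connected $(n,d,\lambda)$-graph via the expander mixing lemma: as long as the white set satisfies $|W_t|>\frac{\lambda}{d+\lambda}n$, there exists a black vertex of positive type at most $(d+\lambda)\frac{|W_t|}{n}+\lambda$, giving the recursion $|W_{t+1}|\ge |W_t|-(d+\lambda)\frac{|W_t|}{n}-\lambda$, whose explicit solution shows the algorithm performs at least $(1+o(1))\frac{n}{d+\lambda}\log\bigl(\frac{d+\lambda}{2\lambda}\bigr)$ steps; by the Bre\v{s}ar et al.\ duality $\gamma^Z_{gr}(G)+Z(G)=n$ this bounds $Z(G)$, and the only probabilistic input is Friedman's theorem that a.a.s.\ $\lambda\le 2\sqrt{d-1}+\epsilon$, so that $\log\bigl(\frac{d+\lambda}{2\lambda}\bigr)\sim\frac12\log d$ yields $Z(\Gnd)\le n\bigl(1-(1-\epsilon)\frac{\log d}{2d}\bigr)$. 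If you insist on the DE route you must supply a uniform-in-$d$ analysis of the whole phase structure; this is an open-ended project (and, judging by the numerics, would presumably beat the constant $\frac12$, which the paper explicitly leaves as an open problem).
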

 
The proof of the upper bound can be found in Section~\ref{section general upper bound}, while the lower bound is proved in Section~\ref{section lower bound}. In addition to these improvements for large $d$, we also give (numerical) upper and lower bounds for some small $d$. In Section~\ref{section numerical upper bounds} using the differential equations method, we propose and analyze a degree-greedy algorithm that finds a zero forcing set. In particular, the analysis of the performance of this algorithm provides the upper bounds presented in Table~\ref{zf-table}. Most of the numerical bounds are calculated in Section \ref{section numerical bounds}, but the best bound for $d=3$ is found in Section \ref{section d=3 improvement}, where we provide a slightly improved algorithm and analysis. The lower bounds are discussed at the end of Section~\ref{section lower bound}.
 
\begin{table}[htbp]
\centering
 \begin{tabular}{c|c|c}
$d$ & lower bound & upper bound \\
\hline 
3 & 0.06992 & 0.17057 \\
4 & 0.14508& 0.25329 \\
5 & 0.21137 & 0.31495 \\
6 &0.26783 & 0.36437 \\
7 &0.31581 & 0.40538 \\
8 &0.35689 & 0.44021 \\
9 &0.39244 & 0.47032 \\
10 &0.42351 & 0.49689 \\
11 &0.45092 & 0.52001 \\
12 &0.47534 & 0.54087 \\
13 &0.49726 & 0.55965 \\
14 &0.51706 & 0.57668 
\end{tabular} 
\caption{Numerical upper and lower bounds on $Z(\Gnd)/n$ that hold a.a.s. for $d=3\ldots 14$ \label{zf-table}}
\end{table} 
 
 
Of particular interest is the bound for cubic graphs, or $3$-regular graphs. Denote the independence number of a graph $G$ by $\alpha(G)$. We say a cubic graph $G$ is \textbf{claw-free} if there is no induced copy of $K_{1,3}$ (the star on 4 vertices) in $G$. Davila and Henning~\cite{Davila} showed that for all connected, claw-free, cubic graphs, we have that $Z(G)\leq \alpha(G)+1$. It has been conjectured that this holds for all cubic graphs. Since all cubic graphs $G$ have $\alpha(G)\geq n/4$, this bound along with our numerical bound for $d=3$ implies that the conjecture is true for almost all cubic graphs.

\subsection{Pairing Model}\label{sec-pairing-model}

Instead of working directly in the uniform probability space of random regular graphs on $n$ vertices $\Gnd$, we use the \textbf{pairing model} or \textbf{configuration model}  of random regular graphs, first introduced by Bollob\'{a}s~\cite{bollobas2}, which is described next. Suppose that $dn$ is even, as in the case of random regular graphs, and consider $dn$ \textbf{configuration points}  partitioned into $n$ labelled buckets $v_1,v_2,\ldots,v_n$ of $d$ points each. A \textbf{pairing} of these points is a perfect matching into $dn/2$ pairs. Given a pairing $P$, we may construct a multigraph $G(P)$, with loops allowed, as follows: the vertices are the buckets $v_1,v_2,\ldots, v_n$, and a pair $\{x,y\}$ in $P$ corresponds to an edge $v_iv_j$ in $G(P)$ if $x$ and $y$ are contained in the buckets $v_i$ and $v_j$, respectively. It is an easy fact that the probability of a random
pairing corresponding to a given simple graph $G$ is independent of the graph, hence the restriction of the probability space of random pairings to simple graphs is precisely $\Gnd$. Moreover, it is well known that a random pairing generates a simple graph with probability asymptotic to $e^{-(d^2-1)/4}$ depending on
$d$. When $d$ is constant, $e^{-(d^2-1)/4}>0$ is as well, so any event holding a.a.s.\ over the probability space of random pairings also holds a.a.s.\ over the corresponding space $\Gnd$. For this reason, asymptotic results over random pairings suffice for our purposes. One of the advantages of using this model is that the pairs may be chosen sequentially so that the next pair is chosen uniformly at random over the remaining (unchosen) points. For more information on this model, see~\cite{NW-survey}.

\subsection{Random 2-regular Graphs}

It can be easily shown that the zero forcing number for random $2$-regular graphs is a.a.s. $(1+o(1))\log n$. We provide the details here for the sake of completeness.

Let $Y_n = Y_n(G)$ be the total number of cycles in any $2$-regular graph $G$ on $n$ vertices (not necessarily random). Since exactly two adjacent vertices are needed to be present in a forcing set, we get that $Z(G) = 2Y_n$.

We know that the random $2$-regular graph is a.a.s.\ disconnected; by simple calculations one can show that the probability of having a Hamiltonian cycle is asymptotic to $\frac 12 e^{3/4} \sqrt{\pi} n^{-1/2}$ (see, for example,~\cite{NW-survey}). We also know that the total number of cycles $Y_n$ is sharply concentrated near $(1/2) \log n$. It is not difficult to see this by generating the random graph sequentially using the pairing model. The probability of forming a cycle in step $i$ is exactly $1/(2n-2i+1)$, so the expected number of cycles is $(1/2) \log n + O(1)$. The variance can be calculated in a similar way. So we get that a.a.s.\ the zero forcing number of a random $2$-regular graph is $(1+o(1)) \log n$.

\section{Numerical Upper Bounds for Small $d$}\label{section numerical upper bounds}

\subsection{Reformulation of the problem}

Let us start with a few observations that will allow us to propose an alternative definition of the zero forcing number and then, based on that, to design a heuristic, greedy algorithm. This algorithm will be used to obtain small zero forcing sets for random $d$-regular graphs and so to get strong upper bounds for $Z(G)$ that holds a.a.s.\ (both numerical and explicit). 

Let $G=(V,E)$ be any graph on $n$ vertices. For convenience, assume that $V = [n] := \{1, 2, \ldots, n\}$. For any $v \in V$, we will use $N(v)$ for the (open) \textbf{neighbourhood} of $v$, that is, the set of neighbours of $v$; $N[v]$ will be used for the \textbf{closed neighbourhood}, that is, $N[v] := N(v) \cup \{v\}$. The \textbf{degree} of $v$ is defined as $\deg(v) := |N(v)|$. Finally, for any $S \subseteq V$, 
$$
N(S) := \left( \bigcup_{v \in S} N(v) \right) \setminus S,
$$
and
$$
N[S] := \bigcup_{v \in S} N[v] = N(S) \cup S.
$$

The main idea behind our reformulation is that the zero forcing set does not need to be fixed in advance. Instead, one can simply start the zero forcing process and try to decide which vertex should be the next one to force, and adjust the zero forcing set accordingly so that this operation is permissible. The idea is quite simple and natural but, unfortunately, the formal definitions are slightly technical. 

In order to build our zero forcing set, we will build a \textbf{Z-sequence}. Given a graph $G$, a sequence $S=(v_1,\dots,v_k)$, where $v_i\in V$ for $1\leq i\leq k$ is a Z-sequence if for each $1\leq i\leq k$, we have
\[
N(v_i)\setminus \bigcup_{j=1}^{i-1} N[v_j]\neq \emptyset.
\]
A maximal Z-sequence is called a \textbf{Z-Grundy dominating sequence}, and the length of a longest Z-Grundy dominating sequence of $G$ is known as the \textbf{Z-Grundy domination number}, denoted $\gamma^Z_{gr}(G)$. The following result of Bre\v{s}ar \textit{et al.}~\cite{Bresar} shows that the zero forcing number of a graph is determined by the Z-Grundy domination number.

\begin{theorem}[\cite{Bresar}]\label{thm:ZGrundy}
	If $G=(V,E)$ is a graph without isolated vertices, then
	\[
	\gamma^Z_{gr}(G)+Z(G)=|V|.
	\]
\end{theorem}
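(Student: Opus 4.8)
The plan is to prove the two inequalities $\gamma^Z_{gr}(G)\ge |V|-Z(G)$ and $\gamma^Z_{gr}(G)\le |V|-Z(G)$ by exhibiting a reversal correspondence between chronological lists of forces and Z-sequences. Write $n=|V|$. The guiding principle is that running the forcing process on a zero forcing set $S$ produces a chronological list of exactly $n-|S|$ forces, and reading this list backwards yields a Z-sequence of the same length; conversely, a Z-sequence of length $k$, read backwards, prescribes a legal forcing order that certifies a zero forcing set of size $n-k$. Before starting I would record two bookkeeping facts about any chronological list of forces $(u_1,w_1),\dots,(u_m,w_m)$: the forced vertices $w_1,\dots,w_m$ are distinct (each white vertex is forced once) and so are the forcing vertices $u_1,\dots,u_m$ (once $u_i$ forces its unique white neighbour it has no white neighbour left), and $m=n-|S|$.

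For the lower bound $\gamma^Z_{gr}(G)\ge n-Z(G)$, I would take a minimum zero forcing set $S$, run the process to obtain a chronological list $(u_1,w_1),\dots,(u_m,w_m)$ with $m=n-Z(G)$, and define the reversed sequence $v_j:=w_{m+1-j}$. I claim this is a Z-sequence with $u_{m+1-j}$ serving as the footprint of $v_j$. The key observation is that at the moment $u_i$ forces $w_i$, every neighbour of $u_i$ other than $w_i$ is already black, hence was forced at an earlier step or lies in $S$; therefore $u_i$ is neither equal nor adjacent to any $w_b$ with $b>i$. Translating this through the reversal gives
\[
u_{m+1-j}\in N(w_{m+1-j})\setminus\bigcup_{l<j}N[w_{m+1-l}]=N(v_j)\setminus\bigcup_{l<j}N[v_l],
\]
which is exactly the defining condition of a Z-sequence. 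This produces a Z-sequence of length $n-Z(G)$.

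For the upper bound, equivalently $Z(G)\le n-\gamma^Z_{gr}(G)$, I would take a longest Z-sequence $(v_1,\dots,v_k)$ with $k=\gamma^Z_{gr}(G)$ and footprints $f_i\in N(v_i)\setminus\bigcup_{j<i}N[v_j]$, and set $S:=V\setminus\{v_1,\dots,v_k\}$; since the $v_i$ are distinct, $|S|=n-k$. To see $S$ is a zero forcing set I would force the vertices in the order $v_k,v_{k-1},\dots,v_1$, using the footprint $f_j$ to force $v_j$, maintaining the invariant that just before forcing $v_j$ the set of white vertices is exactly $\{v_1,\dots,v_j\}$. The footprint condition $f_j\notin N[v_l]$ for all $l<j$ does all the work: it guarantees $f_j\neq v_1,\dots,v_j$ (so $f_j$ is already black) and that $f_j$ is non-adjacent to $v_1,\dots,v_{j-1}$ while adjacent to $v_j$, so $v_j$ is the unique white neighbour of $f_j$ and the force is legal. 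After all $k$ forces every vertex is black, so $S$ is a zero forcing set of size $n-k$.

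The step I expect to be the main obstacle is identifying the correct reversal in the second direction: the naive guess that the footprint set $\{f_i\}$ should be the forced set fails, because the footprint condition controls adjacencies of $f_i$ to earlier $v_j$'s rather than to earlier $f_j$'s. The right move is to force the sequence vertices $\{v_i\}$ themselves in reverse order and let the footprints act as the forcing vertices, at which point the domination condition matches the unique-white-neighbour rule exactly. The hypothesis that $G$ has no isolated vertices plays only a minor, non-degeneracy role (an isolated vertex can be neither a footprint nor a sequence vertex, yet must lie in every zero forcing set); the two constructions above otherwise give matching bounds and hence the claimed identity $\gamma^Z_{gr}(G)+Z(G)=n$.
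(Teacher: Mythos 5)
Your proof is correct, but it sets up the correspondence in the opposite temporal direction from the paper, and the two directions produce different zero forcing sets. The paper keeps the Z-sequence order: given $S=(v_1,\dots,v_k)$ with witness sequence $W=(w_1,\dots,w_k)$, it shows that $V\setminus W$ (the complement of the \emph{witness} set) is a zero forcing set, with $v_i$ forcing $w_i$ in the forward order $i=1,\dots,k$; conversely, from a chronological list of forces it takes the \emph{forcing} vertices, in forward order, as the Z-sequence, with the forced vertices as witnesses. You instead reverse time: from a chronological list of forces you take the \emph{forced} vertices in reverse order as the Z-sequence (with the forcing vertices as your footprints), and from a Z-sequence you declare $V\setminus\{v_1,\dots,v_k\}$ (the complement of the \emph{sequence} set, not of the witness set) to be a zero forcing set, forced in the order $v_k,\dots,v_1$ with footprint $f_j$ doing the forcing --- and you correctly handle the one genuine subtlety of this route, namely that $f_j$ may coincide with some later $v_{j'}$, which your invariant on the white set shows is already black when it must force. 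Both arguments hinge on the same elementary observation (a vertex that forces has no neighbours among later-forced vertices; a witness avoids all earlier closed neighbourhoods), so neither is harder. What the paper's formulation buys is direct compatibility with its algorithms: Algorithm~\ref{algorithm black claws} explicitly maintains the witness set $T^{(2)}$ and outputs its complement $T^{(1)}$ as the zero forcing set, so identifying the forcing set with the complement of the witnesses is the form the later sections actually use. What your formulation buys is symmetry --- it exhibits chronological forcing lists and Z-sequences as exact time-reversals of one another --- plus the small bonus observation that both $V\setminus W$ and $V\setminus\{v_1,\dots,v_k\}$ are zero forcing sets of the same size $|V|-k$.
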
 

We provide here an alternative proof of Theorem~\ref{thm:ZGrundy} for the sake of completeness, and also because the proof will be relevant to how our algorithm works. First though we provide a few definitions needed for the proof. Let $S=(v_1,\dots,v_k)$ be a $Z$-sequence. We will say a vertex $w$ is a \textbf{witness} for $v_i$ if $w\in N(v_i)\setminus\bigcup_{j=1}^{i-1}N[v_j]$. Note that for each $Z$-sequence $S=(v_1,\dots,v_k)$, there exists at least one sequence $W=(w_1,\dots,w_k)$ such that $w_i$ is a witness for $v_i$ for all $1\leq i\leq k$. We will call such a sequence $W$ a \textbf{witness sequence for $S$}. In a slight abuse of notation, we will sometimes use $S$ and $W$ to refer to the set of vertices in the sequences $S$ or $W$ respectively. It is worth noting that $W$ and $S$ can have non-empty intersection. We are now ready to present an alternative proof of Theorem~\ref{thm:ZGrundy}.

\begin{proof}[Proof of Theorem \ref{thm:ZGrundy}]
The main idea behind our proof will be to show that the complement of a witness set is a zero forcing set, and vice versa. Since witness sequences and their corresponding Z-sequences are of the same size, this will complete the proof that $\gamma^Z_{gr}(G)+Z(G)=|V|$. In particular, longer Z-Grundy dominating sequences will yield smaller zero forcing sets, and vice versa.
	
Let $S=(v_1,\dots,v_k)$ be a Z-sequence. Let $W=(w_1,\dots,w_k)$ be a witness sequence for $S$. We claim that $B:=V\setminus W$ is a zero forcing set. Indeed, note that initially $v_1$ can force $w_1$ since $N[v_1]\setminus\{w_1\}\subseteq B$ (note that, by definition, for any $2\le i \le k$ we have $w_i \notin N[v_1]$ and so all vertices but $w_1$ are in $B$). Then $v_2$ can force $w_2$, and in general once the vertices $v_1,\dots,v_{i-1}$ have forced $w_1,\dots,w_{i-1}$, $v_i$ can force $w_i$ since $\left(N[v_i]\cup\bigcup_{j=1}^{i-1}N[v_j]\right)\setminus\{w_i\}$ must have all started black or been turned black. Thus, all the vertices in $W$ will become black, so $B$ is a zero forcing set.

Now, let $B$ be a zero forcing set of size $|V|-k$ for some integer $k$.  For some chronological list of forces for $B$, let $v_i$ and $w_i$ denote the $i$th vertex that forced and was forced, respectively, for $1\leq i\leq k$. We claim that $S=(v_1,\dots,v_k)$ is a Z-sequence (not necessarily maximal) with witness sequence $W=(w_1,\dots,w_k)$. Indeed, we have for each $1\leq i\leq k$, $w_i\in N(v_i)\setminus \bigcup_{j=1}^{i-1}N[v_j]$ since after the $(i-1)$st force, $\bigcup_{j=1}^{i-1}N[v_j]$ must all have been turned black, while $w_i$ remains white. Thus, $S$ is a $Z$-sequence with witness sequence $W$. This completes the proof.
\end{proof}

\subsection{Degree-greedy Algorithm}

In light of Theorem~\ref{thm:ZGrundy}, our goal is to build a long Z-Grundy dominating sequence. Indeed, given a graph $G=(V,E)$ on $n$ vertices, if we can build such a sequence of length $\gamma^*(G) \le \gamma^Z_{gr}(G)$, then we have the upper bound on the forcing number, 
$$
Z(G) = n - \gamma^Z_{gr}(G) \le n - \gamma^*(G).
$$

We attempt to build our Z-Grundy dominating sequence greedily. Throughout the algorithm, we maintain a $Z$-sequence $S$ and we also track the set of vertices that $S$ dominates, $T := \bigcup_{v\in S} N[v]$, as well as its complement $U := V\setminus T$. We say that a vertex $v\in T$ is of \textbf{type $r$} if $|N(v)\cap U|=r$.  Note that the sequence $S$ can be extended by any vertex $v$ which has a neighbour in $U$ (that is, any vertex of type $r$ for some $r\ge 1$). Furthermore, if $G$ is connected and $U\neq \emptyset$, then we can always find a vertex from $T$ which extends $S$. We can think of $U$ as a ``reservoir'' of vertices which shrinks by $r$ whenever a vertex from $T$ of type $r$ is added to the Z-sequence. We would like to extend our Z-sequence for as long as possible and so it is natural to always choose a vertex which decreases the size of $U$ by as little as possible. Thus we consider the following greedy algorithm (see Algorithm~\ref{algorithm degree greedy}).

\begin{algorithm}[ht]\label{algorithm degree greedy}
	\SetKwInOut{Input}{Input}
    \SetKwInOut{Output}{Output}
 \Input{Connected graph $G=(V,E)$}
 \Output{Z-Grundy dominating sequence, $S$}
\textbf{Initialization:} \\
Let $v_1$ be a vertex of minimum degree, 
$S = (v_1)$,
$T = N[v_1]$,
$U = V\setminus T$;\\
 \While{$U\neq\emptyset$}{
  Let $v\in T$ be a vertex of minimum positive type\;
 Move vertices of $N(v)\cap U$ from $U$ to $T$\;
 Append $v$ to the end of $S$\;
 }
 \caption{Degree greedy algorithm}
\end{algorithm}
Note that the first step of the while loop is always possible since we assume the input graph is connected. An iteration of the loop is said to be a \textbf{step of type $r$} if the chosen vertex $v$ is of type $r$.

\subsection{Specific Numerical Bounds for $\Gnd$ for Small $d$}\label{section numerical bounds}

In this section, our goal is to analyze how many iterations Algorithm~\ref{algorithm degree greedy} lasts. To this end, we make use of the so-called differential equations method. See~\cite{NW-de} for an extensive survey of the general method.

We will assume throughout this section that $d\geq 3$, that $dn$ is even, and that we are running our algorithm on $G\in \G_{n,d}$. Note that the degree greedy algorithm only produces a Z-Grundy dominating sequence if the graph is connected. Otherwise the algorithm finds a set that is Z-Grundy dominating in just one component. In this case we can say the algorithm fails, but it is very unlikely. Indeed, it was proven independently in~\cite{Bollobas, NW-connectivity} that for constant $d \ge 3$, $G$ is disconnected with probability $o(1)$ (this also holds when $d$ is growing with $n$, as shown in~\cite{Luczak-connectivity}). However, let us stress the fact that we do \emph{not} condition on the fact that $G$ is connected; we simply work with the (unconditional) pairing model allowing the algorithm to finish prematurely if $G$ is disconnected.

Let $T=T(t)$ and $U=U(t)=V \setminus T(t)$ be the two sets of vertices at time $t$ as defined in our algorithm. In order to analyze the algorithm, we will use the pairing model and only reveal partial information about $\G_{n,d}$ at each step. More precisely, all edges within $T(t)$ will be revealed, but no more. If $v$ of type $r$ is being processed, then we reveal the $r$ neighbours of $v$ in $U(t)$, the edges from $T(t)$ to $N(v) \cap U(t)$, and the edges within $N(v) \cap U(t)$, but keep the edges from $T(t+1) = T(t) \cup N(v)$ to $U(t+1) = U(t) \setminus N(v)$ hidden. 

We need to track how many vertices of each type are in $T(t)$. For any $0 \le i \le d$, let $T_i(t)$ denote the number of vertices of type $i$ in $T(t)$ after $t$ steps of the algorithm.  Observe that by the structure of our algorithm, $T_d(t)=0$ in every step of the process. Let 
\begin{align*}
f_{i,j}((t-1)&/n,T_0(t-1)/n,T_1(t-1)/n,\dots,T_{d-1}(t-1)/n):=\\ &\mathbf{E}[T_i(t)-T_{i}(t-1)\mid G[T(t-1)]\text{ is known and step }t\text{ is of type }j].
\end{align*}
In order to simplify the notation, we will write $f_{i,j}(t-1)$ in place of $f_{i,j}((t-1)/n,T_0(t-1)/n,T_1(t-1)/n,\dots,T_{d-1}(t-1)/n)$. Given a statement $A$, we will denote by $\delta_A$, the Kronecker delta function
\[
\delta_A=\begin{cases}1\text{ if }A\text{ is true,}\\ 0\text{ otherwise.}\end{cases}
\]
Further, let us denote by $U(t)= |U(t)| = n-\sum_{\ell=0}^{d-1} T_\ell(t)$. Note that we do not need to track $U(t)$ since it can be determined based on the other variables we are tracking. Then we have 
\begin{align}
f_{i,j}(t-1)~=~&j\cdot {\binom{d-1}i} \cdot \left( \frac{\sum_{\ell=1}^{d-1} \ell T_\ell (t-1)}{d\cdot U(t-1)} \right)^{d-i-1}\left(1-\frac{\sum_{\ell=1}^{d-1} \ell T_\ell (t-1)}{d\cdot U(t-1)}\right)^i\nonumber\\
&+j \cdot (d-1) \cdot \left(\frac{(i+1)T_{i+1}(t-1)-iT_i(t-1)}{d\cdot U(t-1)}\right)\nonumber\\
&-\delta_{i=j}+\delta_{i=0} + O\left( \frac1n\right).\label{eq:fij}
\end{align}
The explanation of the preceding equality is as follows. The first term is the expected contribution of the $j$ vertices from $U(t-1)$ that are adjacent to the vertex $v$ of type $j$ chosen in the algorithm. Indeed, once we expose one neighbour of $v$ in $U(t-1)$, it becomes of type $i$ if precisely $d-1-i$ out of $d-1$ unmatched points associated with this neighbour are matched back to $T(t-1)$. Since all the edges inside $T(t-1)$ have already been revealed, we get that there are exactly $\sum_{\ell=1}^{d-1}\ell T_\ell (t-1)$ configuration points in $U(t-1)$ matched to configuration points in $T(t-1)$. Thus, the expression $(\sum_{\ell=1}^{d-1} \ell T_\ell (t-1))/(d \cdot U(t-1))$ is the probability that a single configuration point in $U(t-1)$ is matched to a configuration point in $T(t-1)$. Of course, in any particular step we may expose more than one (but still at most $O(1)$) edge, so for the entire step $t$ the probability that a point in $U(t-1)$ is matched to a point in $T(t-1)$ is 
\begin{equation}\label{eqn:line2}
    \frac{\sum_{\ell=1}^{d-1} \ell T_\ell (t-1) + O(1)}{d \cdot U(t-1) +O(1)} = \frac{\sum_{\ell=1}^{d-1} \ell T_\ell (t-1) }{d \cdot U(t-1)} +O\rbrac{\frac 1n},
\end{equation}
where the error term is $O(1/n)$ because the numerator $\sum_{\ell=1}^{d-1} \ell T_\ell (t-1)$ is $O(n)$ and (we will assume that) the denominator $d \cdot U(t-1)$ is $\Omega(n)$. The second term in \eqref{eq:fij} accounts for the possibility that vertices that were type $i+1$ could become type $i$ if they end up being adjacent to the neighbours of $v$, the vertex being processed, and also the possibility that a vertex of type $i$ could become type $i-1$ (and here we get another $O(1/n)$ error similar to line \eqref{eqn:line2}). Finally, the Kronecker deltas accounts for the fact that when we process a vertex of type $j$, that vertex becomes type $0$. 

Now, we will consider the algorithm to run in $d-1$ phases, phase $1$ through phase $d-1$. During the first phase, we expect $T    (t)$ to consist mainly of vertices of type $0$ and type $d-1$ since vertices of type less than $d-1$ will be processed at a faster rate than they are produced (on average). Indeed, the algorithm processes vertices of type $d-1$ until some vertex of some type $i \le d-2$ is produced. After processing the first vertex of a lesser type, we typically return to processing vertices of type $d-1$, but after some more steps of this type we may produce another vertex of a lesser type. When vertices of type $d-1$ become plentiful, vertices of lesser type are more commonly created and these hiccups occur more often. When vertices of type $d-2$ take over the role of vertices of type $d-1$, we say (informally!) that the first phase ends and we begin the second phase. In general, in the $k$th phase, a mixture of vertices of type at most $d-k$ are processed. More specifically, in the $k$th phase we typically have a linear number of vertices of type $d-k$, but the algorithm keeps the number of vertices of type less than $d-k$ sublinear.

During the $k$th phase there are, in theory, two possible endings. It can happen that the number of vertices of type $d-k-1$ starts to grow, so we stop processing vertices of type $d-k$ (in which case we move to the next phase). It is also possible that vertices type $d-k$ are getting so rare that those of type $d-k$ disappear (in which case the process goes `backwards') and we begin processing vertices of type $d-k+1$ again. With various initial conditions, either one could occur. However, the numerical solutions of the differential equations for small values of $d$ support the hypothesis that the degree-greedy process we study never goes `back'. The details of the following differential equations method have been omitted, but can be found in~\cite{NW-greedy}.

According to Theorems~1 and~2 from~\cite{NW-greedy}, we can approximate our prioritized algorithm with a deprioritized version that still performs different types of steps in the same proportions as the prioritized algorithm. It means that the vertices are chosen to process in a slightly different way, not always the minimum positive type, but a random mixture of various types. Once a vertex is chosen, it is treated the same way as in the degree-greedy algorithm.

Let us now fix a phase, say phase $k$, and consider the following system of equations with variables $\tau_{i,k}(x,\tilde{y}_0,\tilde{y}_1,\dots,\tilde{y}_d)=\tau_{i,k}(x,\mathbf{\tilde{y}})$, $1\leq i\leq d-k$:

\begin{eqnarray}
	1 &=& \sum_{i=1}^{d-k}\tau_{i,k}(x,\mathbf{\tilde{y}}),\label{eqn:tausys1}\\
	0 &=& \sum_{j=1}^{d-k}\tau_{j,k}(x,\mathbf{\tilde{y}})\cdot f_{i,j}(x,\mathbf{\tilde{y}}), \ \ \ \ \  \text{ for }1\leq i\leq d-k-1.\label{eqn:tausys2}	
\end{eqnarray}

Then if we let $x=t/n$ and $\tilde{y}_i(x)=T_i(t)/n$, 
 the solution $\tau_{i,k}$ of the preceding system of equations can be interpreted as the proportion of steps of type $i$ that occur in phase $k$. Indeed, in phase $k$, vertices of type less than or equal to $d-k-1$ do not accumulate, so they must be processed at the same rate in which they are created, which implies $\sum_{j=1}^{d}\tau_{j,k}(x,\mathbf{\tilde{y}})\cdot f_{i,j}(x,\mathbf{\tilde{y}}) = 0$ for $1\leq i\leq d-k-1$, and since no steps of type greater than $d-k$ occur in phase $k$, we must have $\sum_{i=1}^{d-k} \tau_{i,k}(x,\mathbf{\tilde{y}}) = 1$.

Using the standard differential equation method, this suggests that $\tilde{y}_i$ should be approximately $y_i$ where the $y_i$ are deterministic functions satisfying the following system of differential equations:
\begin{equation}\label{eqn:ysys}
   \frac{dy_i}{dx}=F(x,\mathbf{y},i,k) := \sum_{j=1}^{d-k} \tau_{j,k}(x,\mathbf{y})\cdot f_{i,j}(x,\mathbf{y}). 
\end{equation}

Now, we can say that phase $k$ is the interval $[x_{k-1},x_k]$, where $x_0=0$ and for all $k\ge 1$, $x_k$ is defined to be the infimum of all $x> x_{k-1}$ such that $\tau_{d-k,k}(x,\mathbf{y})=0$. Indeed, this indicates that vertices of type $d-k-1$ begin to build up and do not decrease under repeated processing of vertices of type less than $d-k$, and we move to the next phase. We can then use the final values, $y_i(x_k)$ of phase $k$ as the initial conditions for phase $k+1$. Since we initialize the algorithm with a single vertex of type $d$, we will have our first initial conditions: $y_i(0)=0$ for all $1\leq i\leq d$. The conclusion is that a.a.s., for any $1 \le k \le d-1$, any $0 \le \ell \le d$, and any $x_{k-1} n \le t \le x_k n$, we have
$$
T_\ell (t) = n y_\ell (t/n) + o(n)
$$

Numerical upper bounds are presented in Table~\ref{zf-table}. Below we present some detailed discussion for $d=3$ and $d=4$. 

\subsubsection{$d=3$} Let us concentrate on random 3-regular graphs; that is, on $d=3$. During the first phase, vertices of types 1 and 2 are processed. The proportions of each step type that occur in this phase is shown in Figure~\ref{fig:3}(b). The solution to the relevant differential equations is shown in Figure~\ref{fig:3}(a), namely $y_0(x)$ and $y_2(x)$ (recall that the number of vertices of type 1 is sublinear). The phase ends at time $t_1 \sim x_1 n$ with $x_1 \approx 0.47574$. At this point of the process the number of vertices of type 0 is clearly at least $t_1$ but, in fact, $y_0(x_1) \approx 0.49112$; moreover, $y_2(x_1) \approx 0.15533$. 

\begin{figure}[ht] 
\begin{centering}
\begin{tabular}{ccc}
  \includegraphics[width=0.3\textwidth]{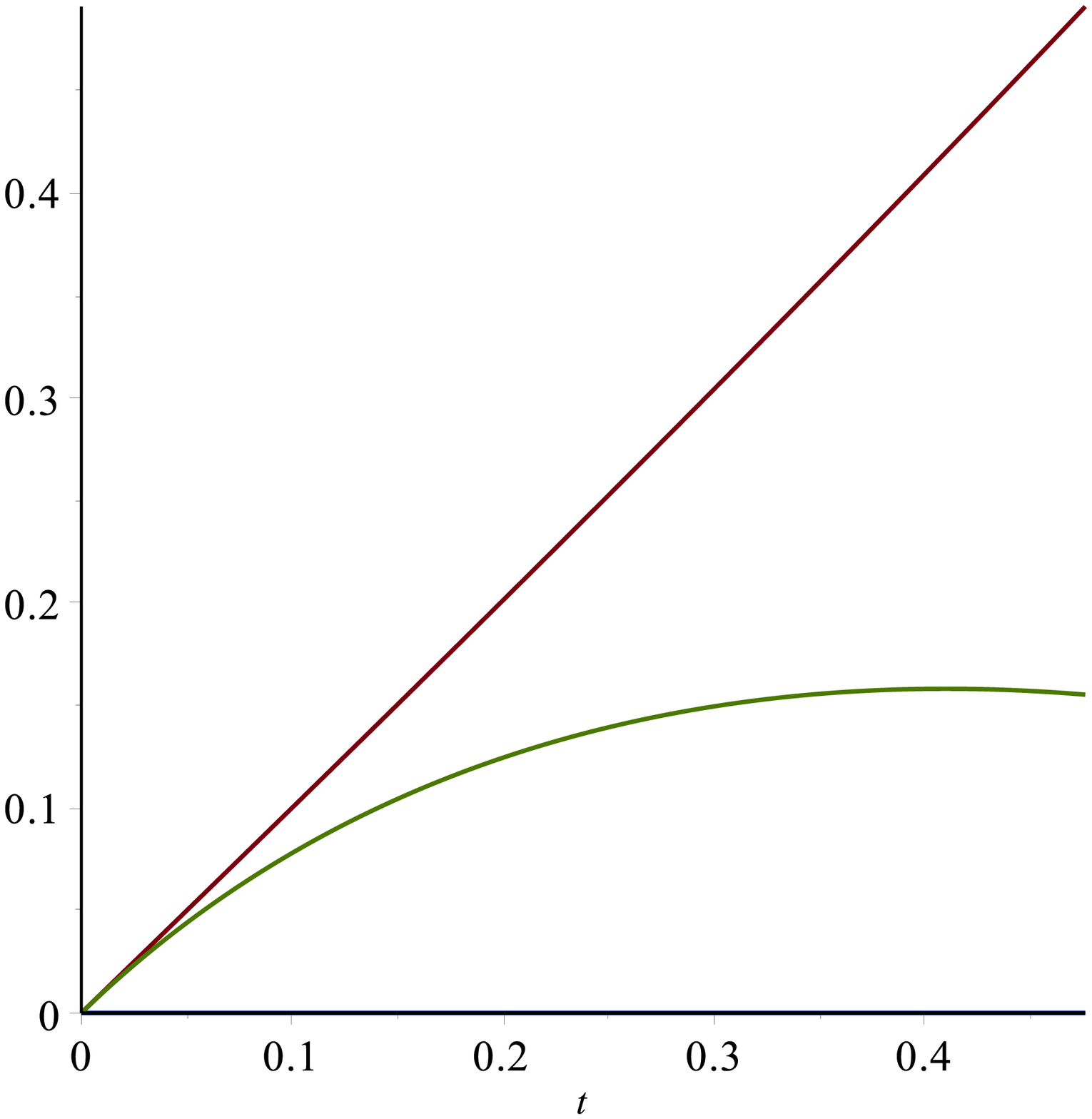} &
  \includegraphics[width=0.3\textwidth]{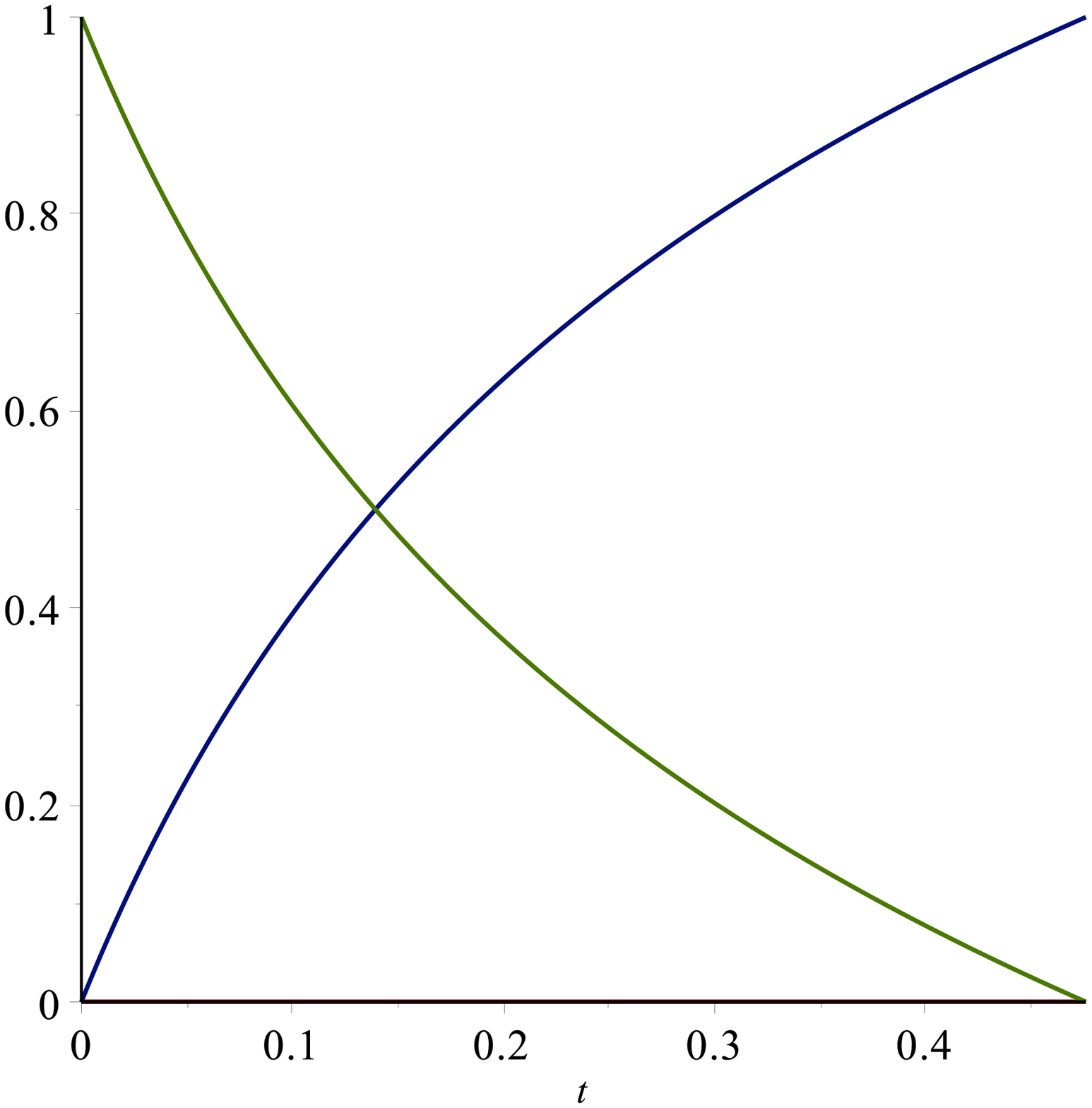} &
  \includegraphics[width=0.3\textwidth]{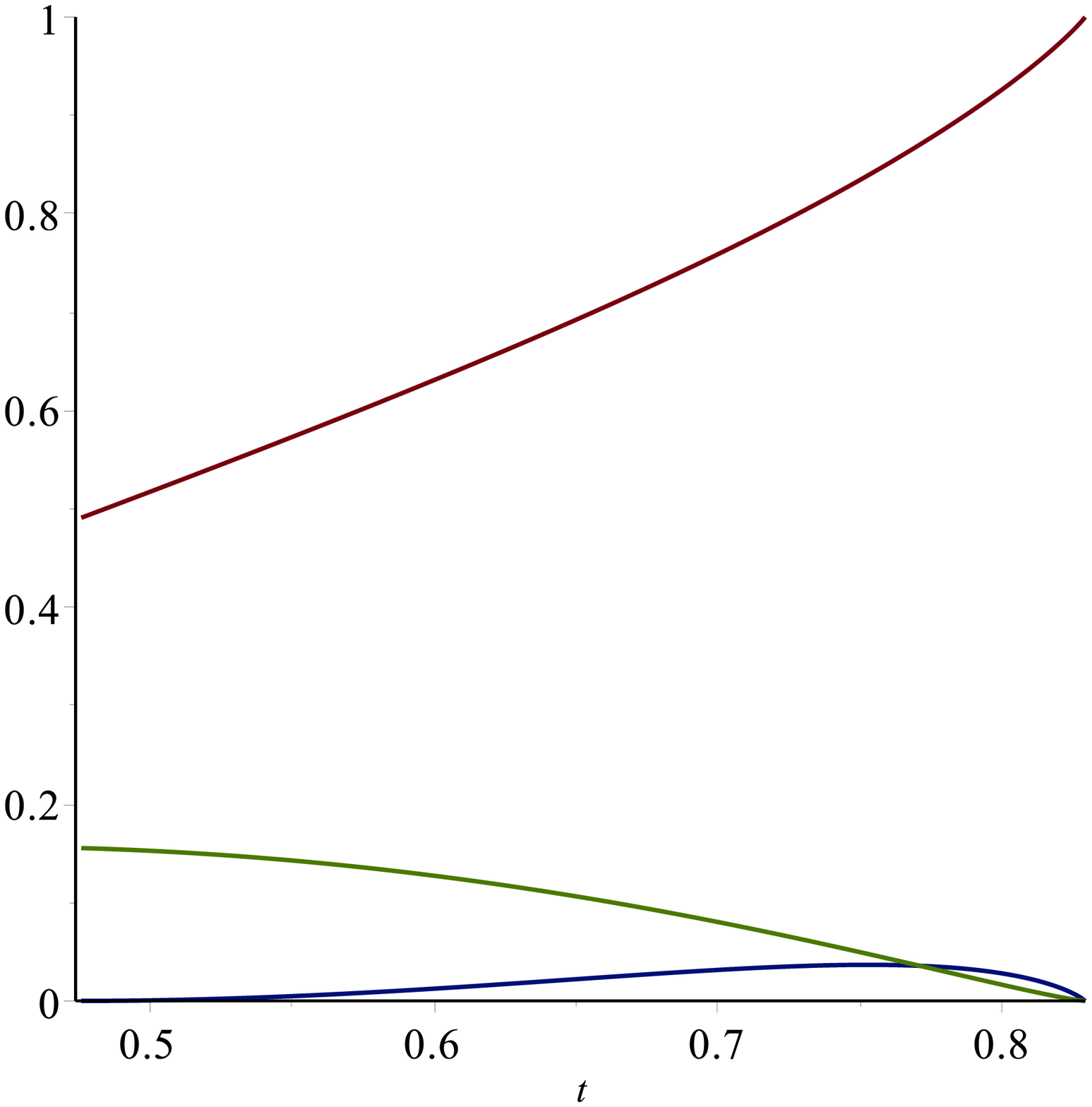} \\
    (a) $y_0$ and $y_2$ &
    (b) $\tau_{1,1}$ and $\tau_{2,1}$ &
    (c) $y_0$, $y_1$, and $y_2$ \\
\end{tabular}\\
\end{centering}
\caption{Solution to the differential equations for $d=3$; phase 1 (a-b) and phase 2 (c).\label{fig:3}}
\end{figure}

During the second phase, only vertices of type 1 are processed. The solution to the relevant differential equations is shown in Figure~\ref{fig:3}(c) The phase ends at time $t_2 = x_2 n$ with $x_2 \approx 0.82929$. In fact, numerical solution implies that $x_2 > 0.82928$ and so we get that a.a.s.\ 
$$ 
Z(G) \le (1-x_2) n \le 0.17072 n \ \ \ \ \ \text{ for } G \in \G_{n,3}.
$$ 

\subsubsection{$d=4$} There are three phases for random 4-regular graphs. During the first phase vertices of types between 1 and 3 are processed; vertices of types 0 and 3 are dominant (vertices of types 1 and 2 are present but the number of them is sublinear)---see Figure~\ref{fig:4}(a). This phase ends at time $t_1 \sim x_1 n$ with $x_1 \approx 0.07167$; $y_0(x_1) \approx 0.07170$ and $y_3(x_1) \approx 0.09858$.

\begin{figure}[ht] 
\begin{centering}
\begin{tabular}{ccc}
  \includegraphics[width=0.3\textwidth]{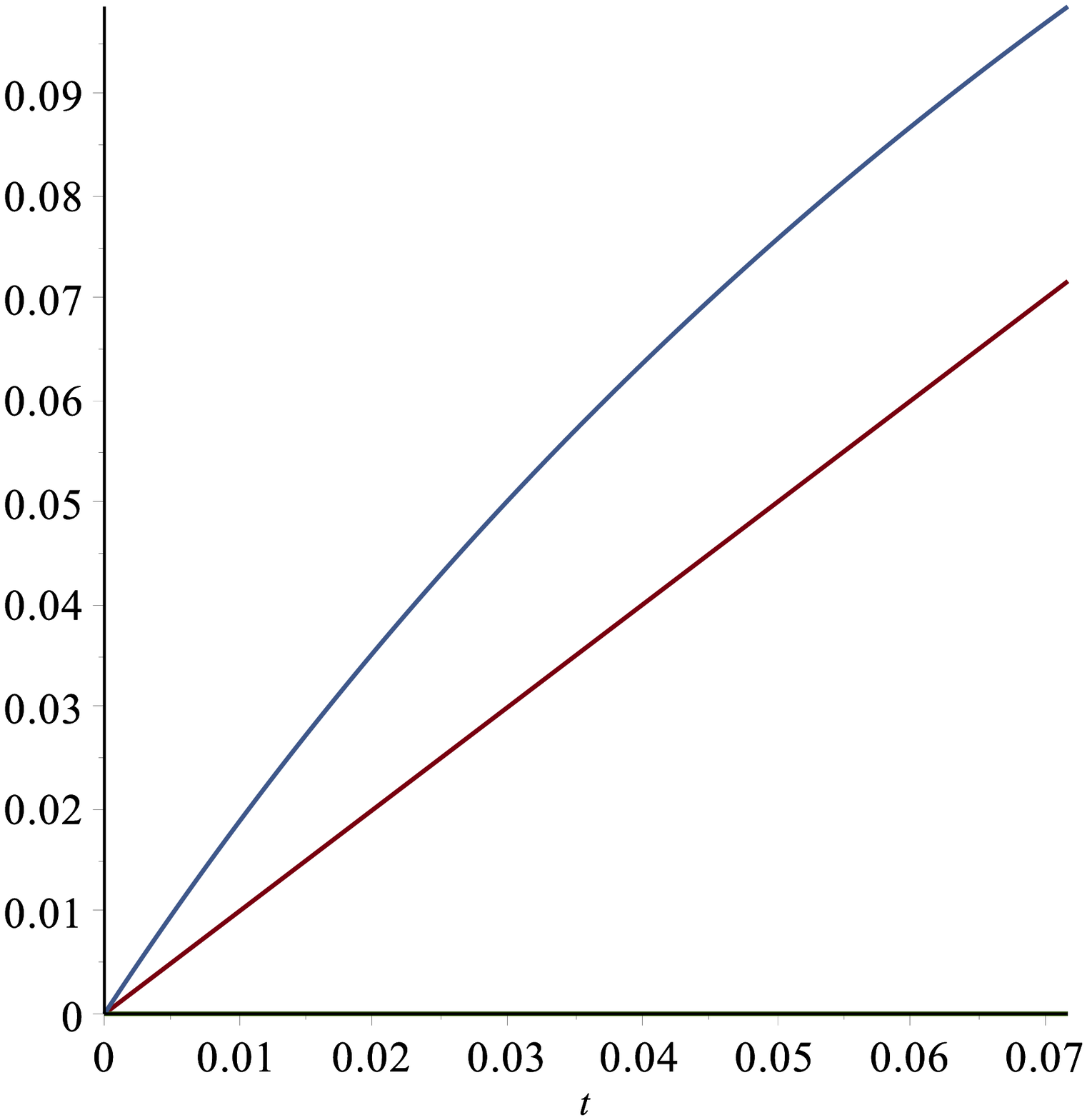} &
  \includegraphics[width=0.3\textwidth]{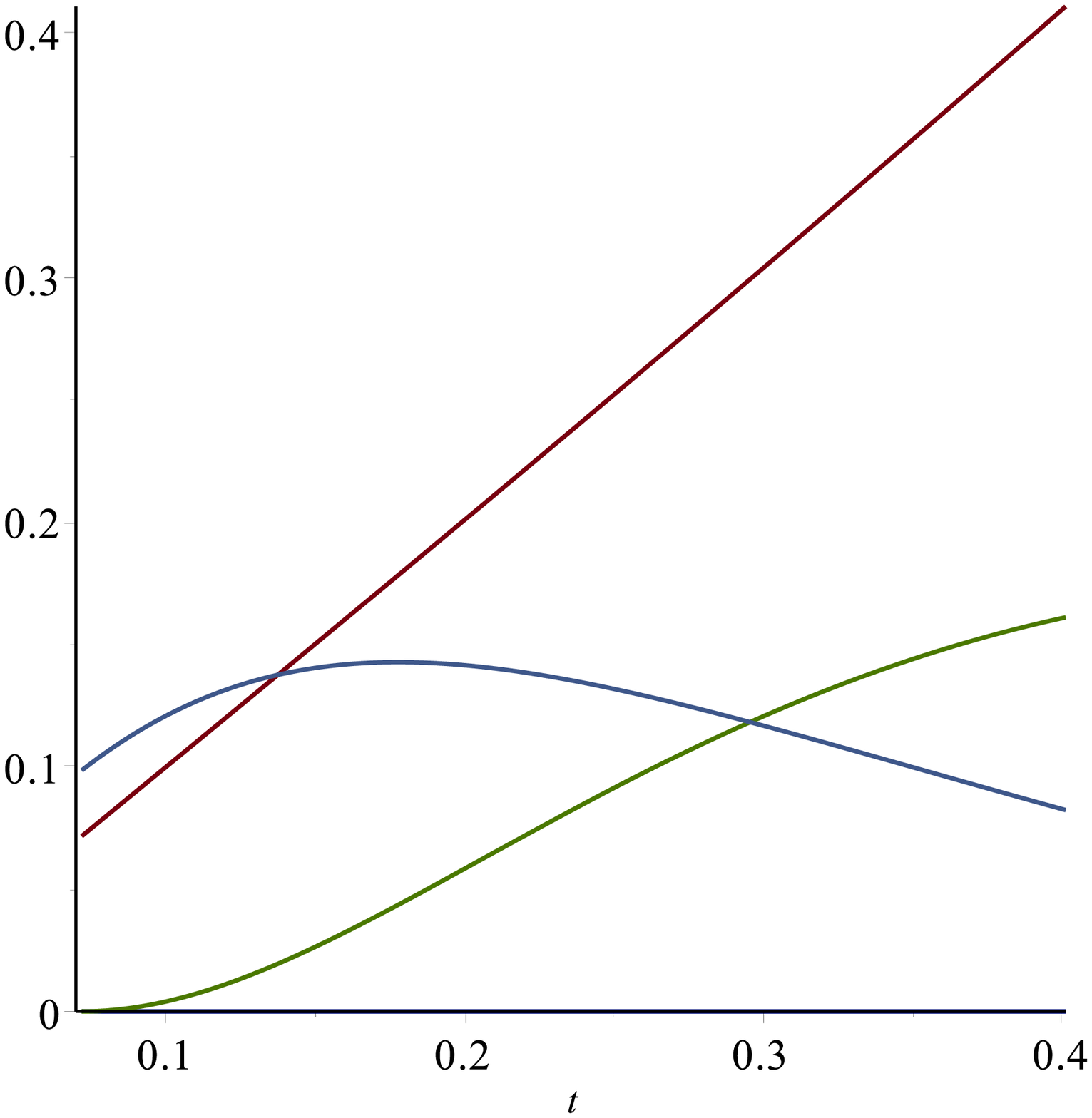} &
  \includegraphics[width=0.3\textwidth]{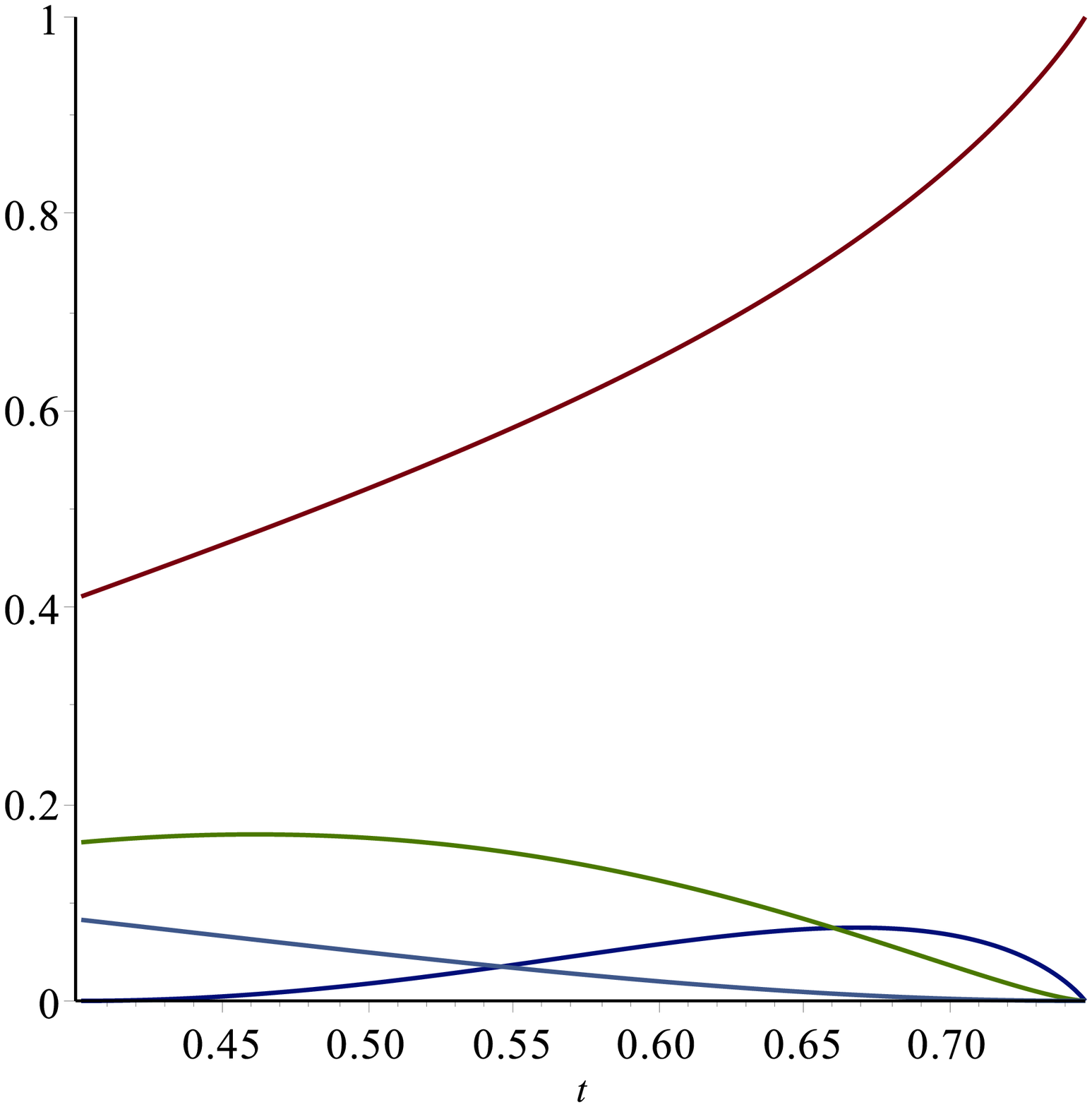} \\
    (a) $y_0$ and $y_3$ &
    (b) $y_0$, $y_2$, and $y_3$ &
    (c) $y_0$, $y_1$, $y_2$, and $y_3$ \\
\end{tabular}\\
\end{centering}
\caption{Solution to the differential equations for $d=4$; phase 1 (a), phase 2 (b), and phase 3 (c).\label{fig:4} }
\end{figure}

During the second phase, vertices of types 1 and 2 are processed; the number of vertices of type 2 becomes linear---see Figure~\ref{fig:4}(b). This phase ends at time $t_2 \sim x_2 n$ with $x_2 \approx 0.40140$; $y_0(x_2) \approx 0.41108$, $y_2(x_2) \approx 0.16120$ and $y_3(x_2) \approx 0.08239$.

The last phase, phase three, processes vertices of type 1, and ends at time $t_3 = x_3 n$ with $x_3 \approx 0.74672$---see Figure~\ref{fig:4}(c). It follows that a.a.s.
$$ 
Z(G) \le (1-x_3) n \le 0.25329 n \ \ \ \ \ \text{ for } G \in \G_{n,4}.
$$

\subsection{Improvement for $d=3$}\label{section d=3 improvement}
In general, we do not expect Algorithm \ref{algorithm degree greedy} to find the longest possible Z-Grundy dominating sequence in a given connected graph. Here we present a modification to the algorithm that does slightly better. The improvement is negligible for large degree $d$, but is noticeable for small $d$, so we only show the results of the improved algorithm for $d=3$.

Let $S$ be a Z-Grundy dominating sequence with witness sequence $W$. The idea behind the improvement is that if there is a vertex $u\in V(G)\setminus S$ such that $N[u]\cap W=\emptyset$, then $u$ can be inserted into $S$, making a longer sequence. In the following algorithm, $S$ will be the Z-sequence we build, $T^{(2)}$ will be a set of witnesses for the Z-sequence $S$, and $T^{(1)}=\left(\bigcup_{v\in S }N[v]\right)\setminus T^{(2)}$. At the end of the process, $T^{(1)}$ will be our zero forcing set. and we will have $T^{(2)}=V\setminus T^{(1)}$. The set $U$ will play essentially the same role it did in Algorithm \ref{algorithm degree greedy}, while the set $T$ from Algorithm \ref{algorithm degree greedy} is split in two here, sets $T^{(1)}$ and $T^{(2)}$.

\begin{algorithm}[ht]\label{algorithm black claws}
	\SetKwInOut{Input}{Input}
	\SetKwInOut{Output}{Output}
	\Input{Connected graph $G$}
	\Output{Z-Grundy dominating sequence, $S$}
	Initialization:\\
	Let $v_1$ be a vertex of minimum degree. 
	$S = (v_1)$,
	$T^{(1)} = \{v_1 \}$,
	$T^{(2)} = \{\}$,
	$U = V\setminus \{v_1\}$\\
	\While{$U\neq\emptyset$}{
		Let $v\in T^{(1)}\cup T^{(2)}$ be a vertex of minimum positive type\;
		\If{$v$ is an element of $T^{(1)}$ of type $\geq 2$ and there exists $u\in N(v)\cap U$ such that $N(u)\subseteq T^{(1)}$}{
			Append $u$ to the end of $S$\;
			Move $v$ from $T^{(1)}$ to $T^{(2)}$\;
            Move $u$ from $U$ to $T^{(1)}$\;
		}
		Arbitrarily choose a vertex $w\in N(v)\cap U$ that minimizes the quantity $|(N(w)\setminus N(v))\cap U|$\;
		Append $v$ to the end of $S$\;
		Move $w$ from $U$ to $T^{(2)}$\;
		Move the vertices of $(N(v)\setminus\{w\})\cap U$ from $U$ to $T^{(1)}$\;
	}
	\caption{Smart degree greedy algorithm}
\end{algorithm}

The \textbf{if} checks to see if we can insert a vertex $u$ into the sequence, giving us a longer sequence. Furthermore, this algorithm gives preference to vertices that will end up being a lower type when choosing which vertex to add to the witness set $W=T^{(2)}$. The rationale behind this is that we only get the extra savings from this new algorithm when we process a vertex of type $\geq 2$ that is not in $W$, so we prioritize adding vertices of higher type to $T$ to make this happen more often.

To analyze this algorithm, we will assume that $n$ is even and we are running it on $G=\mathcal{G}_{n,3}$. we need to track a few more variables than Algorithm \ref{algorithm degree greedy}. Here, let $T^{(1)}(t)$, $T^{(2)}(t)$ and $U(t)$ be the vertices in the sets $T^{(1)}$, $T^{(2)}$, and $U$ at iteration $t$ of the algorithm. Similarly to before, we will say a vertex in $T^{(1)}(t)\cup T^{(2)}(t)$ is of type $r$ if it has $r$ neighbours in $U(t)$. For each $0\leq i\leq 2$, and $k\in \{1,2\}$, let $T_{i,k}(t)$ be the number of vertices of type $i$ in $T^{(k)}(t)$. For ease of notation, we will let $T_{i}(t)=T_{i,1}(t)+T_{i,2}(t)$ and let $U(t)=n-T_0(t)-T_1(t)-T_2(t)$.

We can analyze this algorithm using the differential equations method similarly to our analysis in Section \ref{section numerical bounds}. We will not provide all the details as it is a routine extension of our work in Section \ref{section numerical bounds}, instead we will simply provide the expected changes in our tracked variables and then provide the numerical solution for this case. Before we needed only to track the change in vertices of type $i$ when processing a vertex of type $j$, but here we also need to distinguish between vertices in $T^{(1)}$ and $T^{(2)}$. Thus, we will let $f_{i,j,k,\ell}(t-1)$ track the expected change in $T_{i,k}(t-1)$, given that we process a vertex of type $j$ in $T^{(\ell)}(t-1)$, and given that we have revealed all edges within $T^{(1)}(t-1)\cup T^{(2)}(t-1)$. For the sake of brevity, we will suppress all instances of $(t-1)$ in our variables $T_i(t-1)$, $T_{i,j}(t-1)$ and $U(t-1)$. Let $P := \frac{T_1+2T_2}{3\cdot U}$, so $P$ represents the probability that an exposed edge lands in $T^{(1)}\cup T^{(2)}$. For $a\geq b$, let $P_{a,b}:=2^{\delta_{a\neq b}}\binom2a \binom2b P^{4-a-b}(1-P)^{a+b}$. Then $P_{a,b}$ is the probability that when we process a vertex of type $2$, the neighbours become types $a$ and $b$. Then

\begin{align}
&f_{i,j,k,\ell}(t-1)~=\nonumber\\[0.15cm]
&2j \cdot \left(\frac{(i+1)T_{i+1,k}-iT_{i,k}}{3\cdot U}\right)+\delta_{j=1}\rbrac{\delta_{i=0,k=\ell}-\delta_{i=1,k=\ell}+\delta_{k=2}\binom{2}{i}P^{2-i}(1-P)^i}\label{equation d=3 line 2}\\
&+\delta_{j=2}\left[\sum_{2\geq a\geq b\geq 1} P_{a,b}(\delta_{i=a,k=1}+\delta_{i=b,k=2}+\delta_{i=0,k=\ell}-\delta_{i=2,k=\ell})\right.\label{equation d=3 line 3}\\
&+\sum_{1\le a\le 2}P_{a,0}\left(\delta_{\ell=1}\left(\frac{T_{1,1}+2T_{2,1}}{T_1+2T_2}\right)^2(\delta_{i=a,k=2}+\delta_{i=0,k=1}+\delta_{i=0,k=2}-\delta_{i=2,k=1})\right.\label{equation d=3 line 4}\\
&+\left.\left(1-\delta_{\ell=1}\left(\frac{T_{1,1}+2T_{2,1}}{T_1+2T_2}\right)^2\right)(\delta_{i=a,k=1}+\delta_{i=0,k=2}+\delta_{i=0,k=\ell}-\delta_{i=2,k=\ell})\right)\label{equation d=3 line 5}\\
&+P_{0,0}\left(\delta_{\ell=1}\left(1-\left(1-\left(\frac{T_{1,1}+2T_{2,1}}{T_1+2T_2}\right)^2\right)^2\right)(\delta_{i=0}+\delta_{i=0,k=2}-\delta_{i=2,k=1})\right.\label{equation d=3 line 8}\\
&+\left.\left.\left(1-\delta_{\ell=1}\left(1-\left(1-\left(\frac{T_{1,1}+2T_{2,1}}{T_1+2T_2}\right)^2\right)^2\right)\right)(\delta_{i=0}+\delta_{i=0,k=\ell}-\delta_{i=2,k=\ell})\right)\right]\label{equation d=3 line 9}
\end{align}

The explanation for the equation above is as follows. There are two main changes that we need to account for in this algorithm that were not in Algorithm \ref{algorithm degree greedy}, namely we need to account for when the \textbf{if} clause is triggered, and in the algorithm right after the \textbf{if} clause ends, we need to make sure the vertex we chose minimizes the quantity $|(N(w)\setminus N(v))\cap U|$. 

In line \eqref{equation d=3 line 2}, the term $2j \cdot \left(\frac{(i+1)T_{i+1,k}-iT_{i,k}}{3\cdot U}\right)$ accounts for the situation when a vertex in $T^{(1)}\cup T^{(2)}$ ends in being neighbours with the neighbours of $v$. The other term in line \eqref{equation d=3 line 2} accounts for when the vertex $v$ being processed is of type $1$. More precisely, in this case, $v$ changes from type $1$ to type $0$. Furthermore, the neighbour of $v$ in $U$ moves from $U$ to $T^{(2)}$, and is of type $i$ with probability $\binom2i P^{2-i}(1-P)^i$. The remaining lines of the equation account for when we process a vertex of type $2$.

In line \eqref{equation d=3 line 3} inside the square brackets, we account for when neither neighbour of the vertex being processed ends up type $0$ (i.e. in these cases, we know the \textbf{if} clause of the algorithm is not triggered). We have probability $P_{a,b}$ of the neighbours being types $a$ and $b$, and then in this case, since $a\geq b$,
we move the neighbor of $v$ of type $a$ to $T^{(1)}$ and the neighbour of type $b$ to $T^{(2)}$, and then $v$ goes from type $2$ to type $0$.

In lines \eqref{equation d=3 line 4} and \eqref{equation d=3 line 5} we deal with the situation where one of the neighbours of $v$ is type $0$ and the is of type $1$ or $2$. If $v$ is in $T^{(1)}$, then we need to consider the possibility that the \textbf{if} clause is triggered. This happens with probability $\left(\frac{T_{1,1}+2T_{2,1}}{T_1+2T_2}\right)^2$ (since we are conditioning on the fact that exactly one neighbour of $v$ is of type $0$), and in this case we move the positive type vertex to $T^{(2)}$, the type $0$ vertex to $T^{(1)}$, and then $v$ becomes type $0$ and moves to $T^{(2)}$. Then in line \eqref{equation d=3 line 5}, we deal with when the \textbf{if} clause was not triggered, in which case we prioritize adding vertices to $T^{(1)}$ of higher type, so the positive type neighbour of $v$ is moved to $T^{(1)}$, the type $0$ neighbour is moved to $T^{(2)}$, and then $v$ becomes type $0$.

Then lines \eqref{equation d=3 line 8} and \eqref{equation d=3 line 9} account for when $v$ is of type $2$ and both neighbours of $v$ become type $0$. In this case, if $v$ is in $T^{(1)}$, then we have probability $1-\left(1-\left(\frac{T_{1,1}+2T_{2,1}}{T_1+2T_2}\right)^2\right)^2$ of one of the neighbours of $v$ having all neighbours in $T^{(1)}$ since we are conditioning on both neighbours of $v$ being type $0$. If this is the case, then the \textbf{if} clause is triggered, so we add one vertex to $T^{(1)}$ and one to $T^{(2)}$ of type $0$ (note  $\delta_{i=0}=\delta_{i=0,k=1}+\delta_{i=0,k=2}$). Then $v$ becomes type $0$ and moves from $T^{(1)}$ to $T^{(2)}$. Finally, on line \eqref{equation d=3 line 9}, if neither neighbour of $v$ has all of its neighbours in $T^{(1)}$, then we add a vertex of type $0$ to each of the sets $T^{(1)}$ and $T^{(2)}$, and then $v$ goes from type $2$ to type $0$. This finished the explanation for the expression for $f_{i,j,k,\ell}$.

We will omit most of the remaining details for the rest of the analysis of Algorithm \ref{algorithm black claws}, since it is very similar to  Section \ref{section numerical bounds}. To summarize the main steps we are omitting, we define $\tau_{i, k, m}$ to be the solution to a system that resembles \eqref{eqn:tausys1}-\eqref{eqn:tausys2}, with the interpretation that $\tau_{i,k,m}$ is the proportion of steps of type $i$ in which we process a vertex from $T^{(k)}$ in phase $m\in \{1,2\}$. It is worth noting that since Algorithm \ref{algorithm black claws} does not prioritize processing vertices in $T^{(1)}$ over $T^{(2)}$, or vice versa in the first step of the \textbf{while} loop, this system has one degree of freedom. For our analysis, in phase $1$, we will assume in the \textbf{while} loop, whenever we are processing a vertex of type $2$, we choose if we process a vertex from $T^{(1)}$ or $T^{(2)}$ randomly with probability proportional to the size of $T^{(1)}$ and $T^{(2)}$. This corresponds to adding in the equation $\tau_{2,1,1}(x,\mathbf{\tilde{y}})\cdot T_{2,2}(x,\mathbf{\tilde{y}})=\tau_{2,2,1}(x,\mathbf{\tilde{y}})\cdot T_{2,1}(x,\mathbf{\tilde{y}})$.

Then we let vector-valued function ${\bf y}$ be the solution to a differential equation similar to \eqref{eqn:ysys}. Analogously to the analysis of Algorithm \ref{algorithm degree greedy} for $d=3$ there are two phases. Numerical solutions to the system we have described give a numerical bound of $0.17057$ (an improvement of $0.00015$ over Algorithm \ref{algorithm degree greedy}).

The way in which we choose between processing vertices in $T^{(1)}$ and $T^{(2)}$ can affect the performance of the algorithm. For example, if we always prioritize processing vertices from $T^{(1)}$ over vertices of $T^{(2)}$, Algorithm \ref{algorithm black claws} actually performs identically to Algorithm \ref{algorithm degree greedy} as in this case the vertices of positive type in $T^{(1)}$ do not accumulate, and so the number of vertices we find that can be inserted into our $Z$-sequence is negligible. It may be possible to improve further on our analysis by changing how the algorithm chooses between sets $T^{(1)}$ and $T^{(2)}$ in the first step of the \textbf{while} loop.

\section{Explicit Upper Bounds for large $d$}\label{section general upper bound}

In this section, we provide explicit bounds on the forcing number of $d$-regular graphs.
These bounds are derived from tools based in spectral graph theory; in particular, the expander mixing lemma is used frequently. In what follows, we assume that $G$ is a $n$-vertex $d$-regular graph,
with eigenvalues of its adjacency matrix listed in decreasing order as $\lambda_{1}(G) \ge \ldots \ge \lambda_{n}(G)$.
We refer to $G$ as a $(n,d,\lambda)$ graph, provided $|\lambda_{i}| \le \lambda$ for each $i=2, \ldots ,n$,
where $\lambda \ge 0$. 

\medskip
Let us start with two well-known lemmas.

\begin{lemma}\label{lem:eigenvalue_properties}
If $G=(V,E)$ is a $(n,d,\lambda)$ graph, then the following statements hold:
\begin{enumerate}
\item $\lambda_{1}(G)=d$. 
\item If $\lambda < d$, then $G$ is connected.
\item $\lambda \ge \sqrt{d} - \frac{d}{\sqrt{n}}$.
\end{enumerate} 
\end{lemma}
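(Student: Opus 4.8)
The plan is to dispatch the three claims in turn; each follows from standard spectral facts about the adjacency matrix $A=A(G)$, so no heavy machinery is needed.

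For part~(1), I would note that $d$-regularity makes every row of $A$ sum to $d$, so the all-ones vector satisfies $A\mathbf{1}=d\mathbf{1}$ and $d$ is an eigenvalue. To see it is largest in absolute value, take any eigenvector $x$ with eigenvalue $\mu$ and a coordinate $j$ with $|x_j|$ maximal; the eigenvalue equation $\mu x_j=\sum_{k\sim j}x_k$ together with the triangle inequality gives $|\mu|\,|x_j|\le d\,|x_j|$, hence $|\mu|\le d$. Thus $\lambda_1(G)=d$.

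For part~(2), I would invoke the standard fact that the multiplicity of the eigenvalue $d$ equals the number of connected components of $G$: if $G$ were disconnected, each component would be $d$-regular and supply its own all-ones eigenvector for $d$, so $d$ would have multiplicity at least two and $\lambda_2(G)=d$. This contradicts $\lambda_2(G)\le\lambda<d$, so $G$ is connected.

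Part~(3) is the only one needing computation. I would begin from the trace identity $\sum_{i=1}^n\lambda_i(G)^2=\operatorname{tr}(A^2)=dn$, valid because each diagonal entry of $A^2$ equals the corresponding degree $d$. Removing the top eigenvalue via part~(1) gives $\sum_{i\ge 2}\lambda_i(G)^2=dn-d^2$, and since $|\lambda_i(G)|\le\lambda$ for $i\ge2$,
\[
(n-1)\lambda^2\ \ge\ d(n-d),\qquad\text{so}\qquad \lambda^2\ \ge\ \frac{d(n-d)}{n-1}\ \ge\ d-\frac{d^2}{n}.
\]
The final, slightly fiddly, step is to verify $d-\tfrac{d^2}{n}\ge\bigl(\sqrt d-\tfrac{d}{\sqrt n}\bigr)^2$; expanding the right side to $d-\tfrac{2d^{3/2}}{\sqrt n}+\tfrac{d^2}{n}$, this reduces to $\tfrac{2d^{3/2}}{\sqrt n}\ge\tfrac{2d^2}{n}$, equivalently $n\ge d$, which always holds for a $d$-regular graph on $n$ vertices. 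Taking square roots (the bound being vacuous when the right side is negative) yields $\lambda\ge\sqrt d-\tfrac{d}{\sqrt n}$. The only genuine ``obstacle'' here is this last algebraic comparison; otherwise all three parts are routine consequences of regularity and the trace formula.
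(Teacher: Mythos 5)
Your proof is correct. Note that the paper does not actually prove this lemma---it is stated as one of ``two well-known lemmas'' and used as a black box---so there is no internal argument to compare yours against; what you have written is the standard derivation of each part, and it is sound. Part (1) via the all-ones eigenvector plus the maximal-coordinate bound, part (2) via the multiplicity of the eigenvalue $d$ equalling the number of components (each component of a $d$-regular graph being $d$-regular), and part (3) via the trace identity $\mathrm{tr}(A^2)=dn$ all go through. Your handling of the one delicate point in (3) is also right: from $(n-1)\lambda^2\ge d(n-d)$ you obtain
\[
\lambda^2\ \ge\ \frac{d(n-d)}{n-1}\ \ge\ \frac{d(n-d)}{n}\ =\ d-\frac{d^2}{n}\ \ge\ \left(\sqrt{d}-\frac{d}{\sqrt{n}}\right)^2,
\]
where the last inequality is equivalent to $n\ge d$ (always true here), and extracting the square root is legitimate because either the right-hand side is negative, in which case the claim is vacuous since $\lambda\ge 0$, or both sides are nonnegative.
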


\begin{lemma}[Expander Mixing Lemma]\label{lem:expander_mixing}
If $G=(V,E)$ is a $(n,d,\lambda)$ graph, and $U,W \subseteq V(G)$, then 
\[
\left| \frac {d|U||W|}{n} - e(U,W) \right| \le \lambda \sqrt{|U||W| 
\left( 1- \frac {|U|}{n} \right) \left( 1- \frac {|W|}{n} \right)},
\]
where $e(U,W):=|\{(u,w) \in U \times W: \{u,w\} \in E\}|$ (edges with both ends in $U \cap W$
are counted twice).
\end{lemma}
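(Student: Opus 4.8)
The plan is to realize $e(U,W)$ as a bilinear form in the adjacency matrix $A$ of $G$ and then expand the characteristic vectors of $U$ and $W$ in an orthonormal eigenbasis of $A$. First I would write $e(U,W) = \mathbf{1}_U^{T} A \, \mathbf{1}_W$, where $\mathbf{1}_U$ and $\mathbf{1}_W$ are the characteristic vectors of $U$ and $W$ (as real vectors indexed by $V$); the stated convention that edges inside $U \cap W$ are counted twice is exactly what makes this identity hold. Since $A$ is real symmetric, it admits an orthonormal eigenbasis $v_1, \ldots, v_n$ with $A v_i = \lambda_i v_i$, and by Lemma~\ref{lem:eigenvalue_properties} we may take $v_1 = \frac{1}{\sqrt{n}}\mathbf{1}$ with $\lambda_1 = d$, while $|\lambda_i| \le \lambda$ for all $i \ge 2$.

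Next I would expand $\mathbf{1}_U = \sum_i \alpha_i v_i$ and $\mathbf{1}_W = \sum_i \beta_i v_i$. The leading coefficients are $\alpha_1 = \langle \mathbf{1}_U, v_1\rangle = |U|/\sqrt{n}$ and $\beta_1 = |W|/\sqrt{n}$, so that
\[
e(U,W) = \sum_{i=1}^n \lambda_i \alpha_i \beta_i = \frac{d|U||W|}{n} + \sum_{i=2}^n \lambda_i \alpha_i \beta_i,
\]
which isolates the main term $d|U||W|/n$ and leaves the error as the tail sum over the non-trivial eigenvalues.

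To bound the tail, I would use $|\lambda_i| \le \lambda$ for $i \ge 2$ together with the Cauchy--Schwarz inequality,
\[
\left| \sum_{i=2}^n \lambda_i \alpha_i \beta_i \right| \le \lambda \sum_{i=2}^n |\alpha_i| |\beta_i| \le \lambda \sqrt{\sum_{i=2}^n \alpha_i^2} \sqrt{\sum_{i=2}^n \beta_i^2}.
\]
Finally, by Parseval's identity $\sum_i \alpha_i^2 = \|\mathbf{1}_U\|^2 = |U|$, and subtracting the $i=1$ term gives $\sum_{i \ge 2} \alpha_i^2 = |U| - |U|^2/n = |U|(1 - |U|/n)$, and likewise $\sum_{i\ge 2}\beta_i^2 = |W|(1 - |W|/n)$. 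Substituting these two identities into the Cauchy--Schwarz bound produces exactly the right-hand side of the lemma.

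This argument is entirely standard and I do not expect a genuine obstacle; the only points requiring care are bookkeeping ones. I would double-check that the ``counted twice'' convention is precisely what equates $\mathbf{1}_U^{T} A \mathbf{1}_W$ with $e(U,W)$ when $U$ and $W$ overlap, and that extracting the principal eigenvector contribution correctly yields $d|U||W|/n$ under the normalization $v_1 = \mathbf{1}/\sqrt{n}$. Everything else follows directly from orthonormality, the spectral bound $|\lambda_i|\le\lambda$, and Cauchy--Schwarz.
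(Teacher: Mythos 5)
Your proof is correct, and it is the standard spectral argument for the Expander Mixing Lemma: express $e(U,W)$ as the bilinear form $\mathbf{1}_U^{T} A\, \mathbf{1}_W$, split off the contribution of the principal eigenvector $\mathbf{1}/\sqrt{n}$, and bound the remainder by Cauchy--Schwarz together with $|\lambda_i|\le\lambda$ and Parseval. The paper itself offers no proof of this lemma --- it is stated as one of ``two well-known lemmas'' and used as a black box --- so there is nothing to diverge from; your write-up simply supplies the canonical argument, including the correct handling of the double-counting convention for edges inside $U\cap W$, which is exactly what makes the identity $e(U,W)=\mathbf{1}_U^{T} A\,\mathbf{1}_W$ hold. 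The only point worth making explicit is that one may always take $v_1=\mathbf{1}/\sqrt{n}$ in the orthonormal eigenbasis (since $A\mathbf{1}=d\mathbf{1}$ for a $d$-regular graph), with the remaining basis vectors having eigenvalues $\lambda_2,\dots,\lambda_n$ counted with multiplicity, all of absolute value at most $\lambda$ by the definition of an $(n,d,\lambda)$ graph; this holds even if the eigenvalue $d$ is not simple.
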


\bigskip

We first observe that Lemma \ref{lem:expander_mixing} guarantees
the existence of edges between subsets of sufficiently large size.

\begin{prop}\label{prop:spectral_edge_guarantee}
For any $U,W \subseteq V(G)$, if $u:= \min\{ |U|, |W| \} > \frac{\lambda}{d+\lambda} \; n,$
then $e(U,W)>0$.
\end{prop}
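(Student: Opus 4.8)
The plan is to derive the conclusion directly from the Expander Mixing Lemma (Lemma~\ref{lem:expander_mixing}), which gives the lower bound $e(U,W) \ge \frac{d|U||W|}{n} - \lambda\sqrt{|U||W|\left(1-\frac{|U|}{n}\right)\left(1-\frac{|W|}{n}\right)}$. So it suffices to show that the main term $\frac{d|U||W|}{n}$ strictly dominates the error term. Writing $a := |U|$ and $b := |W|$ (both strictly positive, since the hypothesis $u > \frac{\lambda}{d+\lambda}\,n \ge 0$ forces $\min\{a,b\} \ge 1$), the goal reduces to proving $\frac{dab}{n} > \lambda\sqrt{ab\left(1-\frac{a}{n}\right)\left(1-\frac{b}{n}\right)}$.

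First I would reformulate the hypothesis. From $a > \frac{\lambda}{d+\lambda}\,n$ we get $(d+\lambda)a > \lambda n$, hence $da > \lambda(n-a)$; symmetrically $db > \lambda(n-b)$. Both left-hand sides are strictly positive and both right-hand sides are nonnegative (as $a,b \le n$), so the two inequalities can be multiplied to yield $d^2 ab > \lambda^2 (n-a)(n-b)$. This is the crux of the argument: it is exactly the squared form of the desired inequality, and it is where the threshold $\frac{\lambda}{d+\lambda}$ enters, since equality holds precisely when $a = b = \frac{\lambda}{d+\lambda}\,n$.

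Then I would simply undo the reduction. Dividing $d^2 ab > \lambda^2(n-a)(n-b)$ by $n^2$ gives $\frac{d^2 ab}{n^2} > \lambda^2\left(1-\frac{a}{n}\right)\left(1-\frac{b}{n}\right)$; multiplying through by $ab > 0$ and then taking square roots (valid since both sides are nonnegative) recovers $\frac{dab}{n} > \lambda\sqrt{ab\left(1-\frac{a}{n}\right)\left(1-\frac{b}{n}\right)}$. Combined with the Expander Mixing Lemma bound above, this yields $e(U,W) > 0$, as required.

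\textbf{The main obstacle} is really only a matter of care rather than difficulty, since the argument is short. The one step that needs justification is multiplying the two strict inequalities $da > \lambda(n-a)$ and $db > \lambda(n-b)$: one should note that multiplying $x > y \ge 0$ by $z > w \ge 0$ with $x,z>0$ preserves strictness, via $xz - yw = z(x-y) + y(z-w) > 0$. I would also briefly address the degenerate cases where a right-hand side vanishes (namely $\lambda = 0$, or $a = n$, or $b = n$); in each of these the conclusion is immediate, and in all remaining cases the algebra above goes through verbatim.
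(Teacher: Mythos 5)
Your proof is correct and takes essentially the same approach as the paper: both arguments apply the Expander Mixing Lemma and show that under the hypothesis the main term $\frac{d|U||W|}{n}$ strictly dominates the error term, forcing $e(U,W)>0$. The only difference is bookkeeping — the paper factors out $\frac{|U||W|}{n}$ and bounds $\sqrt{\left(\frac{n}{|U|}-1\right)\left(\frac{n}{|W|}-1\right)}$ by $\frac{n}{u}-1$ using the minimum $u$, whereas you multiply the two one-sided inequalities $d|U| > \lambda(n-|U|)$ and $d|W| > \lambda(n-|W|)$; these are equivalent pieces of elementary algebra.
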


\begin{proof}
Indeed, it follows immediately from the Expander Mixing Lemma that
\begin{eqnarray*}
e(U,W) &\ge& \frac {d|U||W|}{n} - \lambda \sqrt{|U||W| 
\left( 1- \frac {|U|}{n} \right) \left( 1- \frac {|W|}{n} \right)} \\
&=& \frac {|U||W|}{n}\left[ d  - \lambda \sqrt{ 
\left(  \frac {n}{|U|} -1 \right) \left(  \frac {n}{|W|} -1 \right)} \right] \\
&\ge& \frac{u^2}{n} \left[d - \lambda \left(\frac{n}{u} - 1 \right) \right] > 0.
\end{eqnarray*}
The result holds. 
\end{proof}

\bigskip

We are now ready to analyze Algorithm \ref{algorithm degree greedy} for
general $d$. This will provide us with upper bounds on the forcing number 
of arbitrary $d$-regular graphs. That being said, it will be convenient to assume
that $G$ is connected.

Recall that Algorithm \ref{algorithm degree greedy} works by building a $Z$-sequence 
$S=(v_{1}, \ldots ,v_{k})$ of the input graph $G=(V,E)$. For each $1 < t \le k$,
let us denote by $B_{t}$ the set $\bigcup_{i=1}^{t-1}N[v_i]$, and $W_{t}= V(G)\setminus B_t$. We may then denote $B_{1}:= \emptyset$ and $W_{1}:=V(G)$. We will call the vertices in $B_t$ black and the vertices in $W_t$ white.

For each vertex $v \in V(G)$, we define its \textit{type} at time $1\le t \le k$, denoted $\type_{t}(v)$ (or $\type(v)$ when clear), to be the number of neighbours $v$
has in $W_{t}$, namely $\deg_{W_{t}}(v)$. We may then partition $B_{t}$ into the sets $B_{t}^{0}, \ldots ,B_{t}^{d}$, where for $0 \le i \le d$, $B_{t}^{i}$ is defined to be the
vertices of $B_{t}$ with type $i$. Similarly, we may partition $W_{t}$ into the sets $W_{t}^{0}, \ldots ,W_{t}^{d}$. It will be convenient to
denote $B_{t}^{+}$ and $W_{t}^{+}$ as $\bigcup_{i=1}^{d}B_{t}^{i}$ and $\bigcup_{i=1}^{d}W_{t}^{i}$, respectively.

We initialize Algorithm \ref{algorithm degree greedy} by first selecting the vertex $v_{1}$ arbitrarily. For $t >1$, it then chooses $v_{t}$ 
among the vertices of $B_{t}$ whose type is positive and nonzero; that is, we choose $v_t$ from the members of $B_{t}^{+}$. In particular,
it looks to select a vertex of this kind whose type is minimal. The algorithm finishes executing when no such vertex exists. Observe
that since the graph $G$ is connected, once the vertex $v_{k}$ is processed, all the vertices of the graph must be coloured black.
In particular, we know that $|W_{k}| \le d$.

Let us first consider an essential time in the execution of the algorithm. Namely,
define $t_{1}$ as the smallest $1 < t \le k$, for which
\[
 |W_{t}| \le \frac{\lambda}{d+\lambda}n.
\]
It is clear from the above discussion that $t_{1}$ exists. We may therefore think of phase one as those $t \ge 1$, such that $1 \le t < t_{1}$ (note that the phases referred to in this section are different than the phases referred to in Section \ref{section numerical upper bounds}).
Similarly, we may define the remaining time increments as the second and final phase. It turns out that in every iteration of the algorithm in phase one, we are guaranteed to find
a black vertex of type proportional to $|W_{t}|$.

\begin{lemma}
Let $G=(V,E)$ be a $(n,d,\lambda)$ graph, and assume that it is passed as input to Algorithm~\ref{algorithm degree greedy}. For each $1 < t < t_{1}$,
there exists a vertex $u \in B_{t}$, such that
\[
	1 \le \type_{t}(u) \le (d + \lambda)\frac{|W_{t}|}{n} + \lambda.
\]
\end{lemma}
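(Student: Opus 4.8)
The plan is to argue by contradiction. First, since $G$ is connected and $W_t \neq \emptyset$ for every $t < t_1$ (indeed $|W_t| > \frac{\lambda}{d+\lambda}n > 0$), there is at least one edge between $B_t$ and $W_t$, so $B_t^+ \neq \emptyset$ and a black vertex of positive type always exists; it remains to bound the smallest positive type. Write $\theta := (d+\lambda)\frac{|W_t|}{n} + \lambda$ for the target threshold and suppose, for contradiction, that every positive-type black vertex has type strictly greater than $\theta$; equivalently, $H := B_t^+$ satisfies $\type_t(u) > \theta$ for all $u \in H$. If $\theta \ge d$ this is already absurd, since $\type_t(u) \le d$ for every $u$, so I may assume $\theta < d$, i.e. $|W_t| < \frac{d-\lambda}{d+\lambda}n$.

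The argument rests on two estimates for $e(H, W_t)$. For the lower bound, note $e(H, W_t) = \sum_{u \in H}\type_t(u) > \theta |H|$ by the contradiction hypothesis. For the upper bound, I apply the Expander Mixing Lemma (Lemma~\ref{lem:expander_mixing}) to the pair $(H, W_t)$, crucially \emph{keeping} the correction factors:
\[
e(H, W_t) \le \frac{d|H||W_t|}{n} + \lambda\sqrt{|H||W_t|\left(1 - \tfrac{|H|}{n}\right)\left(1 - \tfrac{|W_t|}{n}\right)}.
\]
Combining the two bounds and using $\theta - \frac{d|W_t|}{n} = \lambda\frac{|W_t| + n}{n} > 0$, then squaring and cancelling one factor of $|H|$, reduces everything to the clean inequality $\frac{|H|}{n}(3s + 1) < s(1 - s)$, where $s := |W_t|/n$; that is, the hypothesis forces $|H|/n < \frac{s(1-s)}{3s+1}$.

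To close the argument I need a matching lower bound on $|H|$, and this is the crux. The key observation is that $B_t^0$ (black vertices of type $0$) sends no edges to $W_t$, so $e(B_t^0, W_t) = 0$; since $|W_t| > \frac{\lambda}{d+\lambda}n$, Proposition~\ref{prop:spectral_edge_guarantee} forces $|B_t^0| \le \frac{\lambda}{d+\lambda}n$. As $B_t = B_t^0 \sqcup B_t^+$ and $|B_t| = n - |W_t|$, this yields $|H|/n \ge (1 - s) - \frac{\lambda}{d+\lambda}$. It then suffices to check that $(1-s)\frac{2s+1}{3s+1} \ge \frac{\lambda}{d+\lambda}$ throughout the range $s < \frac{d-\lambda}{d+\lambda}$, which is immediate from $1 - s > \frac{2\lambda}{d+\lambda}$ together with $\frac{2s+1}{3s+1} \ge \frac{3}{4}$ on $(0,1)$; this contradicts the upper bound on $|H|/n$ and completes the proof.

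I expect the main obstacle to be precisely the lower bound on $|H| = |B_t^+|$. A direct application of the Expander Mixing Lemma to $(B_t, W_t)$ only controls the \emph{average} type over all of $B_t$, which is useless for the minimum positive type because the many already-processed, type-$0$ vertices sitting in $B_t^0$ dilute the average. Isolating $B_t^0$ via the zero-edge consequence of Proposition~\ref{prop:spectral_edge_guarantee}, and — equally important — retaining the factor $(1 - |W_t|/n)$ in the Expander Mixing Lemma (it is small exactly when $W_t$ is large, and is what keeps the bound alive as $s \to 1$), are the two points that make the constants work out.
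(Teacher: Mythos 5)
Your proof is correct, and I verified the algebra: the contradiction hypothesis does reduce to $h(3s+1)<s(1-s)$ where $h=|B_t^+|/n$, $s=|W_t|/n$, the bound $|B_t^0|\le\frac{\lambda}{d+\lambda}n$ gives $h\ge 1-s-\frac{\lambda}{d+\lambda}$, and the closing inequality $(1-s)\frac{2s+1}{3s+1}>\frac{2\lambda}{d+\lambda}\cdot\frac34\ge\frac{\lambda}{d+\lambda}$ holds on the range $\frac{\lambda}{d+\lambda}<s<\frac{d-\lambda}{d+\lambda}$. However, you apply the Expander Mixing Lemma to a different pair of sets than the paper, so the routes genuinely differ. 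The paper argues directly, with no contradiction and no case split: it applies Lemma~\ref{lem:expander_mixing} to the \emph{self-pair} $(B_t^+,B_t^+)$ to get $e(B_t^+,B_t^+)\ge |B_t^+|\bigl(\frac{d+\lambda}{n}|B_t^+|-\lambda\bigr)$, extracts a vertex $u$ whose degree inside $B_t^+$ is at least the average, and uses $\type_t(u)=d-\deg_{B_t}(u)\le d-\deg_{B_t^+}(u)$ together with the substitution $|B_t^+|=n-|W_t|-|B_t^0|$ to land on $\type_t(u)\le\frac{d+\lambda}{n}|W_t|+\frac{d+\lambda}{n}|B_t^0|$; the same zero-edge application of Proposition~\ref{prop:spectral_edge_guarantee} that you use (valid since $|W_t|>\frac{\lambda}{d+\lambda}n$ in phase one) then finishes in one line. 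You instead bound the \emph{cross-pair} quantity $e(B_t^+,W_t)$ from above by the mixing lemma and from below by the hypothesis that all positive types exceed the threshold, i.e.\ a ``minimum type $\le$ average type'' argument. The two proofs share the crux you correctly identified --- the type-$0$ vertices would dilute any averaging and must be controlled via Proposition~\ref{prop:spectral_edge_guarantee} --- and they yield the identical threshold $(d+\lambda)\frac{|W_t|}{n}+\lambda$. What the paper's self-pair version buys is economy: no case $\theta\ge d$, no squaring, no quadratic bookkeeping. What your version buys is that it handles one point the paper leaves implicit, namely that $B_t^+\neq\emptyset$ (needed in both proofs to extract a vertex), which you get from connectivity. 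One microscopic caveat in yours: dividing by $\lambda$ and asserting $\lambda\frac{|W_t|+n}{n}>0$ presumes $\lambda>0$; if $\lambda=0$ the contradiction is immediate anyway, since then $e(B_t^+,W_t)\le\frac{d|B_t^+||W_t|}{n}=\theta|B_t^+|$, so nothing is lost.
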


\begin{proof}
Observe that by Lemma~\ref{lem:expander_mixing}, we are guaranteed a
lower bound on twice the number of edges within $B_{t}^{+}$. In particular,
\[
	e(B_{t}^{+},B_{t}^{+}) \ge |B_{t}^{+}|\left(\frac{d+\lambda}{n}|B_{t}^{+}| - \lambda \right).
\]
As a result, there exists a vertex $u \in B_{t}^{+}$, for which
\[
	\deg_{B_{t}^{+}}(u) \ge \frac{d+\lambda}{n}|B_{t}^{+}| - \lambda,
\]
where $\deg_{B_{t}^+}(u)$ is the number of neighbours $u$ has in $B_{t}^{+}$.
Thus, we have that
\begin{align*}
\type_{t}(u) &= d -\deg_{B_{t}}(u)	\\
		&=d - \deg_{B_{t}^{+}}(u) -\deg_{B_{t}^{0}}(u) 	\\
		&\le d - \frac{d+\lambda}{n}|B_{t}^{+}| + \lambda.
\end{align*}
However, $|B_{t}^{+}|= n - |W_{t}| - |B_{t}^{0}|$, so after simplification,
\[
	\type_{t}(u) \le \frac{d + \lambda}{n}|W_{t}| + \frac{d+ \lambda}{n}|B_{t}^{0}|.
\]

It remains to bound the size of $|B_{t}^{0}|$.
Recall that we are in phase 1, which implies
that $|W_{t}| > \frac{\lambda}{d+ \lambda}n$. On the other hand, we know
that $e(B_{t}^{0},W_{t})=0$ by definition of $B_{t}^{0}$. Thus, by Proposition  \ref{prop:spectral_edge_guarantee},
we have that $|B_{t}^{0}| \le \frac{\lambda}{d+ \lambda}n$. Applying this observation to the above equation,
we get that
\[
	\type_{t}(u) \le \frac{d + \lambda}{n}|W_{t}| + \lambda,
\]
thereby proving the lemma, as $\type_{t}(u) \ge 1$ by definition.
\end{proof}

We can think of this lemma as guaranteeing a worst case selection throughout
the first phase. For each $1 < t < t_{1}$, the algorithm selects a vertex $v_{t} \in B_{t}^{+}$, for which
$\type(v_{t}) \le (d + \lambda)\frac{|W_{t}|}{n} + \lambda$. Since $\type(v_{t})$ many vertices are removed from $W_{t}$
when $v_{t}$ is processed, for each $1 <t < t_{1}$ we have
\begin{equation}\label{equation w_t recursion}
	|W_{t+1}| \ge |W_{t}| - (d + \lambda)\frac{|W_{t}|}{n} - \lambda.
\end{equation}

Let us now define $w_{t}:= |W_{t}|$ for each $t \ge 1$. We shall use the above recursion to lower bound the length of phase one, namely
$t_{1}$. As the length of the Z-sequence we are generating is precisely the number of iterations the algorithm performs, this will give us a lower bound on the length of the $Z$-sequence, and as a consequence of Theorem \ref{thm:ZGrundy}, an upper bound on the forcing number of $G$. 

\begin{prop}\label{prop:general_forcing_number_bound}
Let $G=(V,E)$ be a connected $(n,d,\lambda)$-graph, for which $3\leq d \le \sqrt{n}$.
In this case, we have that
\[
	Z(G) \le n - \log \left( \frac{2\lambda}{d+ \lambda}\right)\left(\log \left({1 - \frac{d+ \lambda}{n}}\right) \right)^{-1}=n-(1+o(1))\log \left( \frac{d+\lambda}{2\lambda}\right)\frac{n}{d+\lambda}.
\]
\end{prop}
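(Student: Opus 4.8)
The plan is to turn the recursion \eqref{equation w_t recursion} into a lower bound on $t_1$ (the number of steps in phase one) and then pass to an upper bound on $Z(G)$ through Theorem~\ref{thm:ZGrundy}. Recall that Algorithm~\ref{algorithm degree greedy} produces a $Z$-sequence $S=(v_1,\dots,v_k)$, so $\gamma^Z_{gr}(G) \ge k$ and hence $Z(G) = n - \gamma^Z_{gr}(G) \le n - k$. Since $S$ contains the distinct vertices $v_1,\dots,v_{t_1-1}$, we have $k \ge t_1 - 1$ and therefore $Z(G) \le n - (t_1 - 1)$. It thus suffices to prove the matching lower bound $t_1 - 1 \ge \log\!\big(\tfrac{2\lambda}{d+\lambda}\big)\big(\log(1 - \tfrac{d+\lambda}{n})\big)^{-1}$, after which the exact statement is immediate and the asymptotic form follows by expanding the denominator.

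To bound $t_1$, I would solve the affine recursion. Writing $w_t := |W_t|$ and $a := 1 - \tfrac{d+\lambda}{n}$, the recursion reads $w_{t+1} \ge a\, w_t - \lambda$; its fixed point is $w^\ast = -\tfrac{\lambda n}{d+\lambda}$, so the shifted quantity $u_t := w_t + \tfrac{\lambda n}{d+\lambda}$ satisfies the homogeneous inequality $u_{t+1} \ge a\, u_t$ on $1 < t < t_1$. Iterating from $w_1 = n$ yields
\[
w_t \;\ge\; a^{t-1}\left(n + \frac{\lambda n}{d+\lambda}\right) - \frac{\lambda n}{d+\lambda} \;\ge\; a^{t-1} n - \frac{\lambda n}{d+\lambda}.
\]
Since phase one lasts exactly as long as $w_t > \tfrac{\lambda}{d+\lambda}n$, the estimate above shows this threshold is certainly not yet reached whenever $a^{t-1} > \tfrac{2\lambda}{d+\lambda}$, because then $a^{t-1}n > \tfrac{2\lambda n}{d+\lambda}$ and so $w_t > \tfrac{\lambda n}{d+\lambda}$. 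Taking logarithms (both $\log a$ and $\log\tfrac{2\lambda}{d+\lambda}$ are negative, the latter because $\lambda < d$) reverses the inequality and shows every such $t$ satisfies $t - 1 < \log\!\big(\tfrac{2\lambda}{d+\lambda}\big)/\log a$, whence $t_1 - 1 \ge \log\!\big(\tfrac{2\lambda}{d+\lambda}\big)/\log a$, which is exactly the bound needed.

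Finally I would derive the asymptotic form by expanding $\log a = \log(1 - \tfrac{d+\lambda}{n}) = -(1+o(1))\tfrac{d+\lambda}{n}$, valid because the hypotheses $d \le \sqrt n$ and $\lambda < d$ force $\tfrac{d+\lambda}{n} \le \tfrac{2}{\sqrt n} \to 0$; substituting this and writing $\log\tfrac{2\lambda}{d+\lambda} = -\log\tfrac{d+\lambda}{2\lambda}$ gives $Z(G) \le n - (1+o(1))\log\!\big(\tfrac{d+\lambda}{2\lambda}\big)\tfrac{n}{d+\lambda}$.

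I expect the main obstacle to be the non-homogeneous term $-\lambda$: it rules out a purely geometric decay argument, so the crux is handling it cleanly via the fixed-point shift and then extracting a clean crossing time without accumulating error (in particular, slightly weakening $n + \tfrac{\lambda n}{d+\lambda}$ to $n$ is what produces the clean threshold $\tfrac{2\lambda}{d+\lambda}$). A secondary point requiring care is the bookkeeping that converts the iteration count into the length of the $Z$-sequence via the indexing of $B_t$, $W_t$, and the $v_i$, together with checking that the regime $\tfrac{d+\lambda}{n}\to 0$ genuinely holds and that the sign condition $\lambda < d$ — which makes the bound an actual saving — is in force under the stated hypotheses.
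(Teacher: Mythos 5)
Your route is the same as the paper's: analyze Algorithm~\ref{algorithm degree greedy} via inequality~(\ref{equation w_t recursion}), compare $w_t$ against the exact solution of the associated affine recurrence (your fixed-point shift $u_t = w_t + \frac{\lambda n}{d+\lambda}$ is precisely how the paper's closed-form comparison sequence $a_t$ is obtained), deduce that phase one lasts at least $\log\left(\frac{2\lambda}{d+\lambda}\right)\left(\log\left(1-\frac{d+\lambda}{n}\right)\right)^{-1}$ steps, and convert this into the bound on $Z(G)$ via Theorem~\ref{thm:ZGrundy}, finishing with the expansion of $\log\left(1-\frac{d+\lambda}{n}\right)$. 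The threshold computation, the passage from the geometric estimate to $t_1$, and the asymptotics are all correct.

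There is, however, one genuine gap, and it sits exactly where you gestured at ``bookkeeping''. You correctly state that $u_{t+1}\ge a\,u_t$ is available only for $1<t<t_1$ (inequality~(\ref{equation w_t recursion}) comes from the lemma producing a black vertex of small type, which needs $B_t^{+}\neq\emptyset$, hence $t\ge 2$), but you then iterate ``from $w_1=n$'', which uses the excluded step $u_2\ge a\,u_1$. Unwinding, that step asserts $w_2 = n-d-1 \ge n-d-2\lambda$, i.e.\ $\lambda\ge \frac{1}{2}$. This is not part of the definition of an $(n,d,\lambda)$-graph; it is true here only because of part 3 of Lemma~\ref{lem:eigenvalue_properties}: with $3\le d\le\sqrt{n}$ one gets $\lambda \ge \sqrt{d} - d/\sqrt{n} \ge \sqrt{3}-1 > \frac{1}{2}$. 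This is precisely the role of the hypothesis $3\le d\le \sqrt{n}$ in the statement, and it is the one ingredient of the paper's proof that yours never invokes --- the paper explicitly checks $w_2 = n-d-1 \ge n-d-2\lambda = a_2$ before running its monotonicity induction. Without it, starting the iteration honestly at $u_2$ gives only $w_t \ge a^{t-2}\left(n-d-1+\frac{\lambda n}{d+\lambda}\right)-\frac{\lambda n}{d+\lambda}$, which does not produce the clean threshold $\frac{2\lambda}{d+\lambda}$ and hence not the exact (non-asymptotic) inequality claimed; supplying $\lambda\ge\frac{1}{2}$ closes the gap and makes your proof complete. (A minor side remark: your claim that $\lambda<d$ ``is in force under the stated hypotheses'' is not literally true --- a connected bipartite $d$-regular graph admits no $\lambda<d$ --- but when $\lambda\ge d$ the subtracted term is non-positive and the proposition is vacuous, so nothing is lost; the paper makes the same implicit assumption.)
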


\begin{proof}
As above, let us assume that we pass $G$ to Algorithm \ref{algorithm degree greedy}, 
which returns the $Z$-sequence $S=(v_{1},\ldots,v_{k})$.
We denote $t_{1}$ as the first time $1\le t \le k$ for which $w_{t} \le \frac{\lambda}{d+\lambda}n$. Our goal will be to lower bound the size of $t_{1}$.

By inequality~(\ref{equation w_t recursion}), we know that for $t\geq 2$, $w_t\geq \left(1 - \frac{d +\lambda}{n}\right) w_{t-1} - \lambda$. Let us consider the sequence $(a_{t})_{t \ge 1}$, where $a_{1}:=w_1=n$ and $a_{t}:=\left(1 - \frac{d +\lambda}{n}\right) a_{t-1} - \lambda$
for each $t \ge 1$. This sequence has an exact solution, in which 
\[
	a_{t} = n \left(1+ \frac{\lambda}{d+\lambda} \right) \left(1 - \frac{d + \lambda}{n}\right)^{t-1} - \frac{\lambda n}{d + \lambda}
\]
for each $t \ge 1$.
In particular, observe that $a_{2} = n -d -2\lambda$. On the other hand, we know that $v_{1}$
and all of its neighbours are in $B_2$, so we have that $w_{2} = n -d -1$. 
Moreover, the assumption that $3\leq d \le \sqrt{n}$, together with Lemma \ref{lem:eigenvalue_properties} imply that $\lambda \ge \frac{1}{2}$. We may therefore conclude that $w_{2} \ge a_{2}$. It follows that $w_t\geq a_t$ for all $t \ge 1$, as the function $f(x)=\left(1 - \frac{d +\lambda}{n}\right) x - \lambda$ is monotone increasing. We may therefore analyze the sequence $(a_{t})_{t \ge 1}$ to lower bound how long the first phase lasts. In particular, observe that for all $1\le t \le \log \left( \frac{2\lambda}{d+ \lambda}\right)\left(\log \left({1 - \frac{d+ \lambda}{n}}\right) \right)^{-1}$,
\[
	w_{t} \ge a_{t} > \frac{\lambda n}{d+\lambda}.
\]

Then the $Z$-Grundy dominating sequence $S$ returned by the algorithm has length $k \ge t_{1} \ge \log \left( \frac{2\lambda}{d+ \lambda}\right)\left(\log \left({1 - \frac{d+ \lambda}{n}}\right) \right)^{-1}$.
By Theorem \ref{thm:ZGrundy}, we may conclude that
\[
	Z(G) \le n - \log \left( \frac{2\lambda}{d+ \lambda}\right)\left(\log \left({1 - \frac{d+ \lambda}{n}}\right) \right)^{-1}.
\]
Now, since $\frac{d+ \lambda}{n} \rightarrow 0$, we have $1 - \frac{d + \lambda}{n} = (1+o(1))e^{-(d+\lambda)/n}$, and thus 
\[
\left(\log \left({1 - \frac{d+ \lambda}{n}}\right) \right)^{-1}=-\frac{n}{d+\lambda}
(1+o(1))\]
thereby completing the proof.
\end{proof}













To conclude the section, we consider the case when for each $n \ge 1$, we are given a random $d$-regular graph
$\mathcal{G}_{n,d}$. 
The value of $\lambda$ for random $d$-regular graphs has been studied extensively. A major result due to Friedman~\cite{Fri} is the following:
For every fixed  $\epsilon > 0$ and for $G\in \mathcal{G}_{n,d}$, a.a.s.\
$$
\lambda(G) \le 2 \sqrt{d-1}+ \epsilon.
$$

As a result, the above proposition implies that for each fixed $d$ a.a.s,
\[
	Z(\mathcal{G}_{n,d}) \le   n - (1+o(1))\log\left( \frac{d + \lambda}{2 \lambda} \right)\frac{n}{d+\lambda},
\]
where $\lambda=2\sqrt{d}$. If we consider large but constant $d$, then this implies the main result of the section.
\begin{theorem}
For any fixed $\epsilon > 0$ and for sufficiently large fixed $d=d(\epsilon)$, it holds that a.a.s. $\mathcal{G}_{n, d}$ satisfies
\[
	Z(\mathcal{G}_{n,d}) \le   n \left(1  - \frac{(1{-} \epsilon)\log d}{2d} \right).
\]
\end{theorem}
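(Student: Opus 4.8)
The plan is to derive the theorem directly from Proposition~\ref{prop:general_forcing_number_bound} by feeding in the spectral bound supplied by Friedman's result and then simplifying the resulting expression as $d \to \infty$. First I would observe that, by Friedman's result~\cite{Fri}, choosing an auxiliary constant $\epsilon' > 0$ small enough that $2\sqrt{d-1} + \epsilon' \le 2\sqrt{d}$, we have $\lambda(\Gnd) \le 2\sqrt{d-1} + \epsilon' \le 2\sqrt{d}$ a.a.s. Since the defining inequality $|\lambda_i| \le 2\sqrt d$ then holds for all $i \ge 2$, the graph $\Gnd$ is a.a.s. an $(n,d,\lambda)$ graph with $\lambda = 2\sqrt{d}$; in particular no monotonicity argument is needed, we simply apply the proposition with this value of $\lambda$. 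For fixed $d$ and $n$ large we have $3 \le d \le \sqrt{n}$, and since $2\sqrt{d} < d$ for $d \ge 5$, part~2 of Lemma~\ref{lem:eigenvalue_properties} (or the connectivity results cited earlier) guarantees that $\Gnd$ is a.a.s. connected. Hence the hypotheses of Proposition~\ref{prop:general_forcing_number_bound} are a.a.s. satisfied with $\lambda = 2\sqrt{d}$, giving
\[
Z(\Gnd) \le n - (1+o_n(1))\,\log\!\left(\frac{d+\lambda}{2\lambda}\right)\frac{n}{d+\lambda}
\]
a.a.s., where $o_n(1) \to 0$ as $n \to \infty$ with $d$ held fixed.

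Next I would estimate the saved term for $\lambda = 2\sqrt{d}$. Writing $\frac{d+\lambda}{2\lambda} = \frac{d + 2\sqrt d}{4\sqrt d} = \frac{\sqrt d}{4} + \frac12$, we obtain $\log\!\left(\frac{d+\lambda}{2\lambda}\right) = \frac12 \log d + \log\!\left(\frac14 + \frac{1}{2\sqrt d}\right) = (1+o_d(1))\frac{\log d}{2}$, while $\frac{1}{d+\lambda} = \frac{1}{d+2\sqrt d} = (1+o_d(1))\frac1d$. Multiplying, the saved term per vertex is $\log\!\left(\frac{d+\lambda}{2\lambda}\right)\frac{1}{d+\lambda} = (1+o_d(1))\frac{\log d}{2d}$, where here $o_d(1) \to 0$ as $d \to \infty$.

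Finally I would combine the two estimates. Given $\epsilon > 0$, I would first fix $d$ large enough that the large-$d$ simplification yields $\log\!\left(\frac{d+\lambda}{2\lambda}\right)\frac{1}{d+\lambda} \ge (1-\epsilon/2)\frac{\log d}{2d}$; this determines the threshold $d(\epsilon)$. With $d$ so fixed, taking $n \to \infty$ forces the factor $(1+o_n(1))$ to be at least $1 - \epsilon/2$ a.a.s., so a.a.s.
\[
Z(\Gnd) \le n - (1-\epsilon/2)^2\,\frac{\log d}{2d}\, n \le n\left(1 - \frac{(1-\epsilon)\log d}{2d}\right),
\]
using $(1-\epsilon/2)^2 \ge 1-\epsilon$. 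The one point that genuinely demands care is exactly this interplay between the two error terms: the $(1+o_n(1))$ coming from the recursion in Proposition~\ref{prop:general_forcing_number_bound}, which is an $n \to \infty$ statement for fixed $d$, versus the $(1+o_d(1))$ coming from the large-$d$ simplification. One must quantify over $d$ \emph{before} $n$ — choosing the threshold $d(\epsilon)$ to absorb the $d$-error, and only then invoking the a.a.s. statement in $n$ — rather than conflating the two limits. Everything else is a routine substitution.
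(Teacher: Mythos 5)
Your proposal is correct and follows essentially the same route as the paper: invoke Friedman's theorem to get $\lambda \le 2\sqrt{d-1}+\epsilon' \le 2\sqrt{d}$ a.a.s., apply Proposition~\ref{prop:general_forcing_number_bound} with $\lambda = 2\sqrt{d}$, and simplify $\log\bigl(\tfrac{d+\lambda}{2\lambda}\bigr)\tfrac{1}{d+\lambda}$ to $(1+o_d(1))\tfrac{\log d}{2d}$ for large $d$. Your explicit treatment of the order of limits --- fixing the threshold $d(\epsilon)$ to absorb the $d$-error before letting $n\to\infty$ --- is precisely the step the paper compresses into the phrase ``if we consider large but constant $d$,'' so your write-up is, if anything, more careful than the original.
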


\section{Lower bound and bipartite holes}\label{section lower bound}

 In this section we improve on a comment made in~\cite{Benny}, where they observed that if a graph $G$ does not contain two disjoint sets of $q$ vertices each, with no edges from one set to the other (we call such a substructure a $q$-bipartite hole), then $Z(G) \ge n - 2q$. The reason for this is that if $S=(v_1,\dots,v_{2q})$ is a $Z$-sequence with corresponding witness sequence $W=(w_1,\dots,w_{2q})$, the sets $\{v_1,\dots,v_{q}\}$ and $\{w_{q+1},w_{q+2},\dots,w_{2q}\}$ form a $q$-bipartite hole. In \cite{Benny} they use a known (but loose) result stating that a.a.s. $\mathcal{G}_{n, d}$ has no $\frac{20 n \log d}{d}$-bipartite hole. We improve this and show the following: 
\begin{theorem}
  For any fixed $\epsilon > 0$ and for sufficiently large $d=d(\epsilon)$, it holds that a.a.s.\ $\mathcal{G}_{n, d}$ has no $(1+\epsilon)\frac{2 n \log d}{d}$-bipartite hole. As a result, a.a.s.\ 
  $$
  Z(\mathcal{G}_{n, d}) \ge n \left(1 - (1+\epsilon) \frac{4 \log d}{d} \right).
  $$
\end{theorem}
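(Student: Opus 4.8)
The plan is to prove the $q$-bipartite hole statement directly by a first-moment (union bound) argument in the pairing model, then invoke the already-established reduction to obtain the lower bound on $Z(\mathcal{G}_{n,d})$. Set $q := (1+\epsilon)\frac{2n\log d}{d}$. A $q$-bipartite hole is a pair of disjoint vertex sets $A, B$ with $|A| = |B| = q$ and $e(A,B) = 0$. I would let $X$ denote the number of such pairs and show $\mathbf{E}[X] = o(1)$, so that a.a.s.\ no such configuration exists. Summing over the $\binom{n}{q}\binom{n-q}{q}$ choices of ordered disjoint pairs (or the unordered version, up to a constant factor), the main task is to estimate the probability that a fixed pair $(A,B)$ has no edges between them.

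The key step is bounding $\mathbf{P}(e(A,B)=0)$ in the pairing model. I would reveal the pairing sequentially on the $dq$ configuration points belonging to $A$ and bound the probability that none of them is matched to a point of $B$. At each exposed point, the chance of landing in $B$'s $dq$ points, out of the $\Theta(dn)$ remaining free points, is roughly $\frac{dq}{dn} = \frac{q}{n}$; taking the product over the $dq$ points of $A$ gives $\mathbf{P}(e(A,B)=0) \le \bigl(1 - \tfrac{q}{n} + O(1/n)\bigr)^{dq} \le \exp\bigl(-(1{-}o(1))\frac{dq^2}{n}\bigr)$. One must be slightly careful that the conditional probabilities stay close to $q/n$ throughout (the denominator shrinks by only $O(dq) = o(dn)$ since $q = o(n)$), which justifies the uniform estimate up to lower-order corrections.

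Combining the count with the probability bound and using $\binom{n}{q} \le (en/q)^q = \exp\bigl(q\log(en/q)\bigr)$, the exponent of $\mathbf{E}[X]$ is at most
\begin{equation*}
2q\log\!\frac{en}{q} - (1{-}o(1))\frac{dq^2}{n}.
\end{equation*}
Dividing through by $q$, it suffices that $\frac{dq}{n}$ exceeds $2\log\frac{en}{q}$ by a growing-in-$d$ margin. With $q = (1+\epsilon)\frac{2n\log d}{d}$ one has $\frac{dq}{n} = (1+\epsilon)2\log d$, while $\log\frac{en}{q} = \log d - \log\bigl((1+\epsilon)2\log d\bigr) + 1 = (1+o(1))\log d$ for large $d$; hence $\frac{dq}{n} = (1+\epsilon)\cdot 2\log\frac{en}{q}\,(1+o(1))$, and for $d$ large enough depending on $\epsilon$ the negative term dominates, giving $\mathbf{E}[X] = o(1)$. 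This is where the factor $2\log d$ in the bound, and the need for $d$ large in terms of $\epsilon$, come from.

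The final sentence of the theorem follows immediately from the reduction described in the opening paragraph of the section: if $\mathcal{G}_{n,d}$ has no $q$-bipartite hole, then $Z(\mathcal{G}_{n,d}) \ge n - 2q = n\bigl(1 - (1+\epsilon)\frac{4\log d}{d}\bigr)$, since any $Z$-sequence of length $2q$ would produce vertex sets $\{v_1,\dots,v_q\}$ and $\{w_{q+1},\dots,w_{2q}\}$ that constitute a $q$-bipartite hole (no $w_j$ for $j>q$ lies in $N[v_i]$ for $i\le q$, by the witness definition). The main obstacle is the probability estimate in the second step: one must verify that conditioning in the sequential pairing keeps each matching probability uniformly $\frac{q}{n}(1+o(1))$, and that the union-bound count does not lose more than the $2q\log(en/q)$ entropy term, so that the two exponential contributions can be compared cleanly for large $d$. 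Everything else is a routine first-moment calculation and an appeal to the already-proven bipartite-hole-to-$Z$ implication.
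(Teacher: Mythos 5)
Your overall strategy is the same as the paper's: a first-moment bound in the pairing model over pairs of disjoint $q$-sets, followed by the bipartite-hole-to-$Z(G)$ reduction via witness sequences (which you state correctly, and which matches the paper's opening paragraph of this section). Your final numerology is also the ``right'' one. However, there is a genuine gap in your key estimate $\mathbf{P}(e(A,B)=0) \le \bigl(1 - \tfrac{q}{n} + O(1/n)\bigr)^{dq}$. When you expose the partner of a configuration point of $A$, that partner may lie in $A$ itself; such an internal pair consumes \emph{two} of $A$'s points in a single exposure step. Consequently the number of avoidance factors in your product is not $dq$ but $dq - m$, where $m$ is the number of pairs internal to $A$, and $m$ can be as large as $dq/2$. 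The bound your sequential argument actually justifies is therefore only
\[
\mathbf{P}(e(A,B)=0) \le \left(1 - \frac{q}{n}\right)^{dq/2} \le \exp\left(-\frac{dq^2}{2n}\right).
\]
This factor of $2$ is fatal to the stated constant: your comparison requires $\frac{dq}{n} = 2(1+\epsilon)\log d$ to exceed $2\log\frac{en}{q} = (2+o(1))\log d$, and with the halved exponent you would instead need $\frac{dq}{n}$ to exceed $(4+o(1))\log d$. That only proves the nonexistence of $(1+\epsilon)\frac{4n\log d}{d}$-bipartite holes, i.e.\ $Z(\mathcal{G}_{n,d}) \ge n\bigl(1-(1+\epsilon)\frac{8\log d}{d}\bigr)$, which is weaker than the theorem.

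The missing ingredient is precisely the one the paper's proof is built around: one must control the pairs matched inside $A$. Each internal pair is itself unlikely (it costs a factor of roughly $q/n$), so configurations with many internal pairs contribute little, and the true exponent is recovered only after balancing these two effects. The paper does this by stratifying the count by $b$, where $bn$ is the number of edges inside $A_1$, computing $X_{a,b}$ exactly in the configuration model, and then maximizing the resulting exponent $f(a,b,d)$ over $b$; the maximizer is $b^* \approx \tfrac12 da^2$ (equivalently, about $\tfrac{dq^2}{2n}$ internal pairs), not $b=0$, and only after this optimization does the exponent come out to $(2K - K^2)\frac{\log^2 d}{d}$, matching your heuristic $2q\log\frac{en}{q} - \frac{dq^2}{n}$. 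So your claimed probability estimate is correct as a statement, but proving it requires exactly the sum over internal-edge counts that your argument skips; as written, the proof does not go through at the claimed constant.
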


\begin{proof}
We use the first moment method. Fix some real numbers $a, b$ and let $X_{a, b}$ be the expected number of induced subgraphs consisting of disjoint sets $A_1, A_2$ of $an$ vertices each, such that there are no edges between $A_1$ and $A_2$ and there are exactly $bn$ edges within $A_1$ (the number of edges within $A_2$ is unspecified here). For our first moment calculation, the expected number of bipartite holes with $an$ vertices on each side is $\sum_b X_{a, b}$. Since the sum is over a linear number of terms, it suffices to choose $a$ such that $X_{a, b}$ is exponentially small for all $b$.

Using the pairing model described in Section~\ref{sec-pairing-model}, let us estimate $X_{a, b}$ for given parameters $a$ and $b$. First we choose our sets $A_1$, $A_2$ so we have 
\[
\binom{n}{an \; an\; n-2an} = \frac{n!}{(an)! (an)! (n-2an)!}
\]
choices. Next we will choose $2bn$ configuration points from among the $dan$ points in $A_1$, so we have 
\[
\binom{dan}{2bn} = \frac{(dan)!}{(2bn)!(dan-2bn)!}
\]
choices. Now we put a matching on these $2bn$ points so we have 
\[
(2bn)!! = \frac{(2bn)!}{2^{bn}(bn)!}
\]
choices. Now for the rest of the $dan -2bn$ configuration points in $A_1$, they just need to be matched to any points outside of $A_1 \cup A_2$, so there are 
\[
(dn-2dan)_{(dan-2bn)} = \frac{(dn-2dan)!}{(dn-3dan+2bn)!}
\]
choices. At this point there are $dn-2dan+2bn$ 
many configuration points left unmatched and any of them can match to each other, so we have 
\[
(dn-2dan+2bn)!!= \frac{(dn-2dan+2bn)!}{2^{\frac{dn-2dan+2bn}{2}}\rbrac{\frac{dn-2dan+2bn}{2}}!}
\]
choices. Now we will multiply the previous several lines, and divide by the number of possible matchings, that is, 
\[
(dn)!! = \frac{(dn)!}{2^\frac{dn}{2} \rbrac{\frac{dn}{2}}!}
\]
and after a little cancellation we get
\[
X_{a,b}=\frac{n!(dan)!(dn-2dan)!(dn-2dan+2bn)!\rbrac{\frac{dn}{2}}!}{(an)! (an)! (n-2an)!(dan-2bn)!(bn)!(dn-3dan+2bn)!\rbrac{\frac{dn-2dan+2bn}{2}}! (dn)! }\cdot2^{dan - 2bn}.
\]
Now, by Stirling's formula $n! = (1+o(1)) \sqrt{2\pi n} \rbrac{\frac{n}{e}}^n = \rbrac{\frac{n}{e}}^n \exp(o(n))$, so we estimate 
\[
\frac{\St{n}\St{dan}\St{dn-2dan}\St{dn-2dan+2bn}\St{\frac{dn}{2}}\cdot2^{dan - 2bn +o(n)}}{\left(\frac{an}e\right)^{2an}\hspace{-0.15cm}
\St{n-2an}\hspace{-0.15cm}\St{dan-2bn}\hspace{-0.15cm}\St{bn}\hspace{-0.15cm}\St{dn-3dan+2bn}\hspace{-0.15cm}\St{\frac{dn-2dan+2bn}{2}}\hspace{-0.2cm}\St{dn} }
\]
and now a large power of $\frac{n}{e}$ cancels and we are left with
\[
X_{a,b}=\frac{(da)^{dan}(d-2da)^{dn-2dan}(d-2da+2b)^{dn-2dan+2bn} \rbrac{\frac{d}{2}}^\frac{dn}{2} \cdot2^{dan - 2bn +o(n)}}{a^{2an}(1-2a)^{n-2an}(da-2bn)^{dan-2bn}b^{bn} (d-3da+2b)^{dn-3dan+2bn}\rbrac{\frac{d-2da+2b}{2}}^{\frac{dn-2dan+2bn}{2}}d^{dn}}.
\]
Letting $g(x) := x \log x$, and 
\begin{align}
f(a, b, d):= &g(da)+g(d-2da)+g(d-2da+2b)+g\rbrac{\frac{d}{2}} - 2g(a)-g(1-2a)-g(da-2b)-g(b)\nonumber\\
& \quad -g(d-3da+2b) -g\rbrac{\frac{d-2da+2b}{2}} - g(d) +\rbrac{da - 2b}\log 2 \label{eqn:f}
\end{align}
we can then write 
\[
X_{a, b}= \exp\rbrac{f(a, b, d)n + o(n)}.
\]
So now, to prove that a.a.s.\ the graph has no bipartite hole with $an$ vertices on each side (for some $a$ which may depend on $d$), it suffices to show that   $f(a, b, d)$ is negative for all $b$. Note that 
\begin{align*}
\frac{\partial f}{\partial b} &= 2 \ln(d-2da+2b)+ 2 \ln (da-2b) - \ln b -  2 \ln(d-3da+2b) -\ln\rbrac{\frac{d-2da+2b}{2}} - 2 \ln 2\\
& = \ln \rbrac{\frac{(d-2da+2b)^2(da-2b)^2}{4b(d-3da+2b)^2 \rbrac{\frac{d-2da+2b}{2}}}} = \ln \rbrac{\frac{(d-2da+2b)(da-2b)^2}{2b(d-3da+2b)^2 }}=0
\end{align*}
when 
\[
(d-2da+2b)(da-2b)^2=2b(d-3da+2b)^2.
\]
Expanding, and then factoring again,
\[
d(1-2a)(d-2ad+2b)(d^2a^2+4dab-4b^2-2db)=0.
\]
Solving this for $b$, it is clear that the solution we want is from the quadratic factor, 
\begin{equation}\label{eqn:b}
b = \frac{2da-d + \sqrt{\rbrac{2da-d}^2 +4d^2a^2}}{4},
\end{equation}
in other words we claim that, given fixed $a, d$, the value of $b$ above maximizes $f(a, b, d)$.

Now we imagine $d \rightarrow \infty$, and $a = \frac{K \log d}{d}$. The value of $b$ becomes 
\begin{align*}
b&= \frac{2da-d + \sqrt{d^2 -4d^2a  +8d^2a^2}}{4} = \frac{d}{4} \rbrac{2a-1+\sqrt{1-4a+8a^2}}\\
& = \frac{d}{4} \rbrac{2a-1+1+ \frac{1}{2}(-4a+8a^2) -\frac{1}{8}(-4a+8a^2)^2+ O(a^3)} =  \frac12 da^2 + O(da^3)
\end{align*}
Since $g'(x) = \log x + 1$, $g''(x)=x^{-1}$ and $g'''(x)=-x^{-2}$ by Taylor's theorem we have 
\[
g(x+\Delta x) = g(x) + (\log x + 1) \Delta x +\frac{(\Delta x)^2}{2x} +O\rbrac{ \frac{(\Delta x)^3}{x^2}}
\]for $|\Delta x|< \frac12 x$. If we plug in $a=\frac{K \log d}{d}$ and $b=\frac12 da^2 + O(da^3)$ we get (asymptotics are in $d$ for the following)
\begin{align*}
f(a, b, d)&= g(da)+g(d-2da)+g(d-2da+2b)+g\rbrac{\frac{d}{2}} - 2g(a)-g(1-2a)-g(da-2b)-g(b)\\
& \quad -g(d-3da+2b) -g\rbrac{\frac{d-2da+2b}{2}} - g(d) +\rbrac{da - 2b}\log 2\\
&= g(da)+\GG{d}{-2da}+ \frac{(-2da)^2}{2d} + \GG{d}{+(-2da+2b)} + \frac{(-2da+2b)^2}{2d} \\
& \quad +g\rbrac{\frac{d}{2}} - 2a \log a -\rbrac{-2a }-\sbrac{\GG{da}{-2b})}-g(b)\\
& \quad -\sbrac{\GG{d}{+(-3da+2b)} + \frac{(-3da+2b)^2}{2d}}\\
& \quad-\sbrac{\GG{\frac d2}{+(-da+b)} + \frac{(-da+b)^2}{d}} - g(d) +\rbrac{da - 2b}\log 2 +o\rbrac{\frac{\log^2 d}{d}}\\
&= -da(\log d + 1) + 2b ( \log da + 1)+ (da-b)\rbrac{\log\frac d2 + 1} -\frac{3}{2}da^2+2a\log d - b \log b\\
& \quad +\rbrac{da - 2b}\log 2 +o\rbrac{\frac{\log^2 d}{d}}\\
&= -b(\log d+1) + 2b ( \log da + 1) -\frac{3}{2}da^2+2a\log d - b \log b-b\log 2 +o\rbrac{\frac{\log^2 d}{d}}\\
&= b -\frac{3}{2}da^2+2a\log d +o\rbrac{\frac{\log^2 d}{d}}\\
&= K \rbrac{2- K} \frac{\log^2 d}{d}+o\rbrac{\frac{\log^2 d}{d}}
\end{align*}
which is negative if $K = 2(1+\epsilon)$ and $d$ is sufficiently large. This completes the proof.
\end{proof}

Finally we will justify numerical lower bounds for small $d$ using the same method as above. For fixed $d$, to show that a.a.s. $\mathcal{G}_{n, d}$ has no $an$-bipartite hole it suffices to show that $f(a, b, d)<0$ where $f$ is defined in equation \eqref{eqn:f} and $b$ is taken to be the value in $\eqref{eqn:b}$. It then follows that $Z(\mathcal{G}_{n, d}) \ge (1-2a)n$. The table below provides a summary for $3 \le d \le 14$ and justifies Table \ref{fig:4}. In particular the third column below corresponds to the lower bounds in Table~\ref{fig:4}. 

\begin{table}[htbp]
\centering
 \begin{tabular}{c|c|c}
$d$ & $a$ & $1-2a$ \\
\hline 
3 & 0.46504& 0.06992 \\
4 & 0.42746&  0.14508\\
5 & 0.39432&  0.21136\\
6 & 0.36609& 0.26782\\
7 & 0.34210& 0.31580\\
8 & 0.32156& 0.35688\\
9 & 0.30378& 0.39244\\
10 & 0.28825& 0.42350\\
11 & 0.27454& 0.45092\\
12 & 0.26233& 0.47534\\
13 & 0.25137& 0.49726\\
14 & 0.24147& 0.51706
\end{tabular} 
\caption{Values of $d$ together with values $a$ such that a.a.s. $\mathcal{G}_{n, d}$ has no $(an)$-bipartite hole.}
\end{table}

\section{Concluding Remarks} 
In this paper, we have analyzed the zero forcing number of random $d$-regular graphs, providing an upper bound by considering a random greedy algorithm which creates a zero forcing set and providing a lower bound by considering the structure of bipartite holes. For any $\epsilon > 0$ and sufficiently large $d$, we find that a.a.s., 
$  (\frac{1}{2}-\epsilon)\frac{\log d}{d} \le 1-\frac{Z(\Gnd)}{n} \le (4+\epsilon)\frac{\log d}{d}$.
Of course, a central open problem is to determine the constant in front of $\log d / d$ and it seems reasonable to guess that it ought to be $2+\sqrt{2}$ as is the case for $G(n,p)$. 
Note that if $S=(v_1,\ldots, v_k)$ is a $Z$-sequence with witness sequence $W = (w_1,\ldots, w_k)$, then $v_i\sim w_i$ for all $1\le i\le k$ and $v_i \not\sim w_j$ whenever $i<j$. The authors in \cite{Benny} call such a structure a $k$-witness and are able to use first and  second moment methods to determine asymptotically, the size of the largest $k$-witness which appears in $G(n,p)$.  In the setting of random regular graphs, the calculations become much more complicated as the edges do not appear independently. For example, the first moment argument given in Section \ref{section lower bound} takes several pages, whereas a corresponding argument in $G(n,p)$ takes only a line or two (as seen in \cite{Benny}). We also wonder whether a different algorithm can be used to give a better general upper bound for large $d$.

\section{Acknowledgement} 

The numerical results presented in this paper (in particular, upper bounds in Table~\ref{zf-table} and the figures which appear in the appendix) were obtained using Julia language~\cite{Julia}. We would like to thank Bogumi\l{} Kami\'nski from SGH Warsaw School of Economics for helping us to implement it.

\newpage
\section{Appendix}

\begin{figure}[ht] 
\begin{centering}
\begin{tabular}{cc}
  \includegraphics[width=0.4\textwidth]{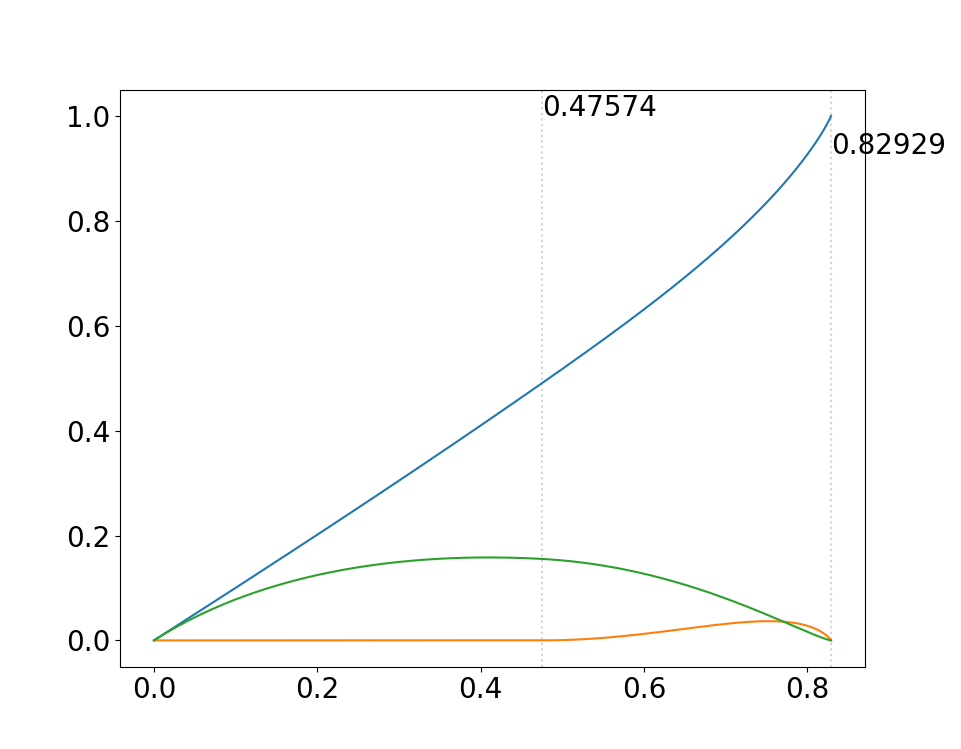} &
  \includegraphics[width=0.4\textwidth]{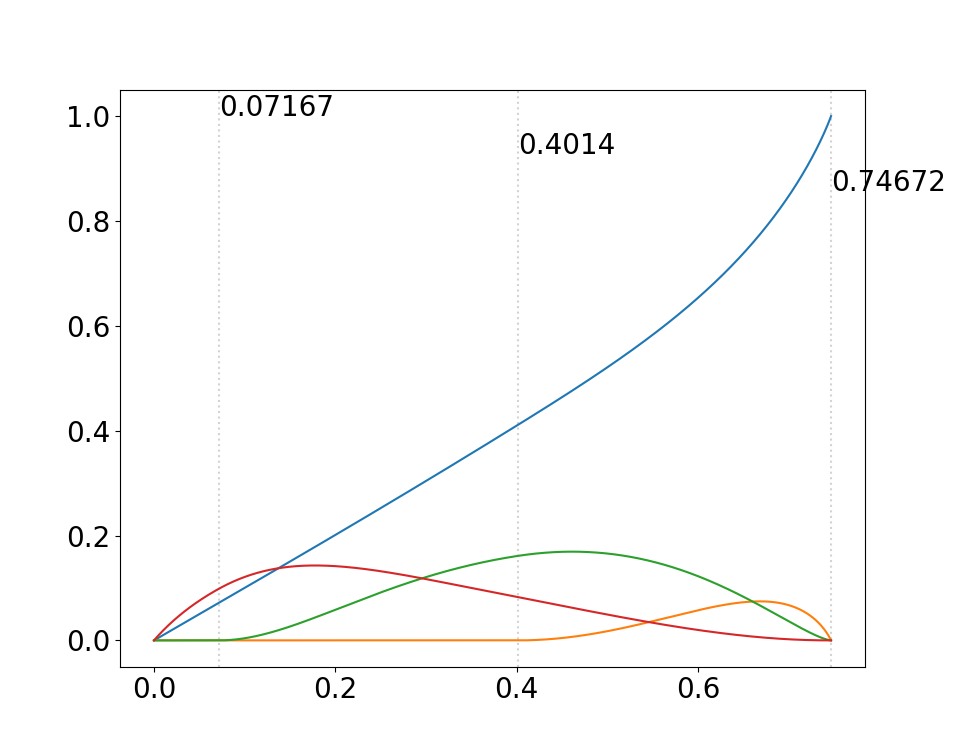} \\
    (a) $d=3$ &
    (b) $d=4$ \\
  \includegraphics[width=0.4\textwidth]{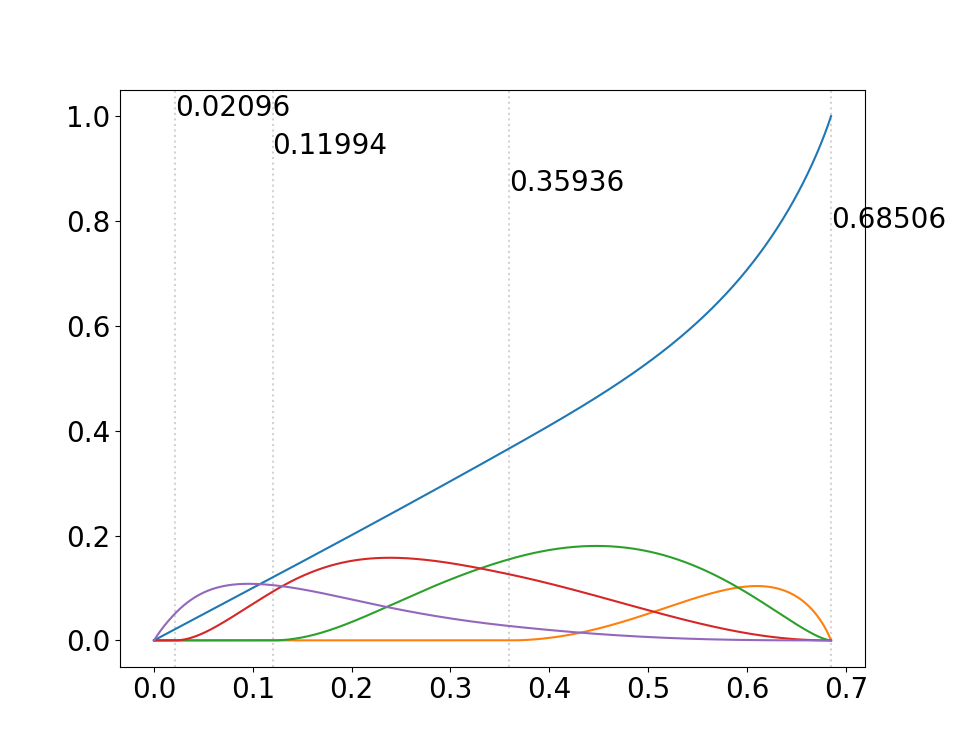} &
  \includegraphics[width=0.4\textwidth]{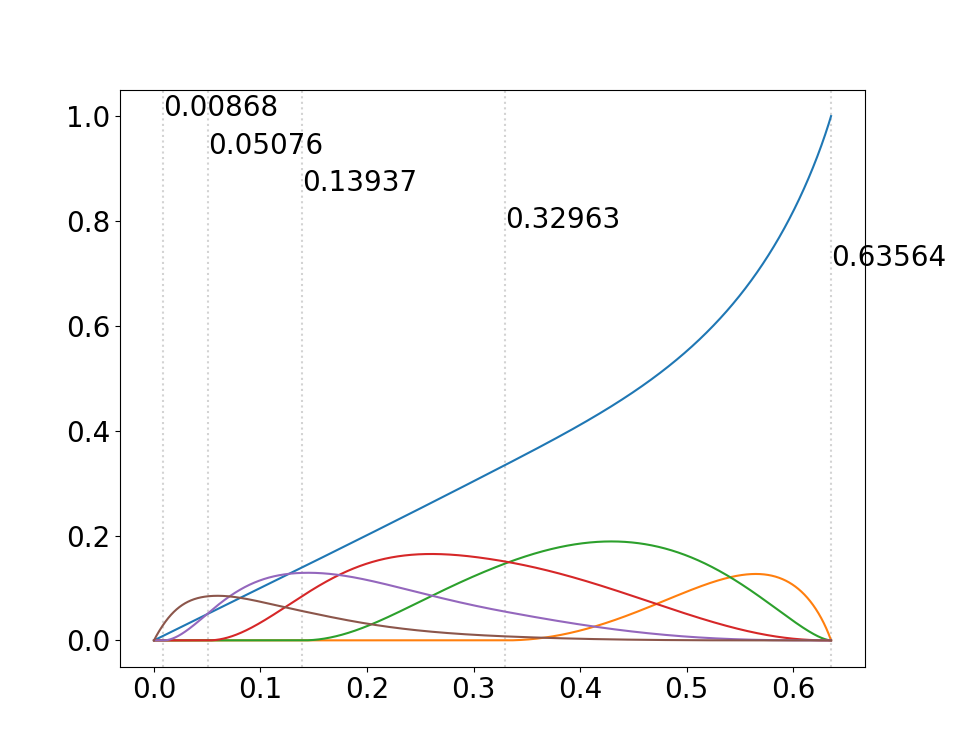} \\
    (c) $d=5$ &
    (d) $d=6$ \\
  \includegraphics[width=0.4\textwidth]{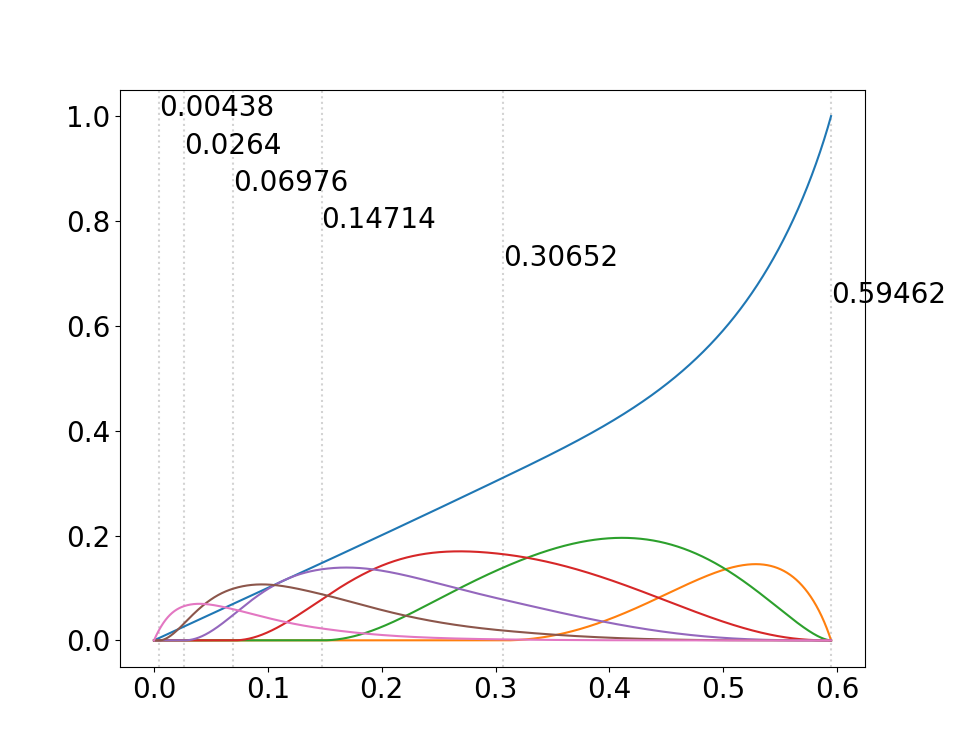} &
  \includegraphics[width=0.4\textwidth]{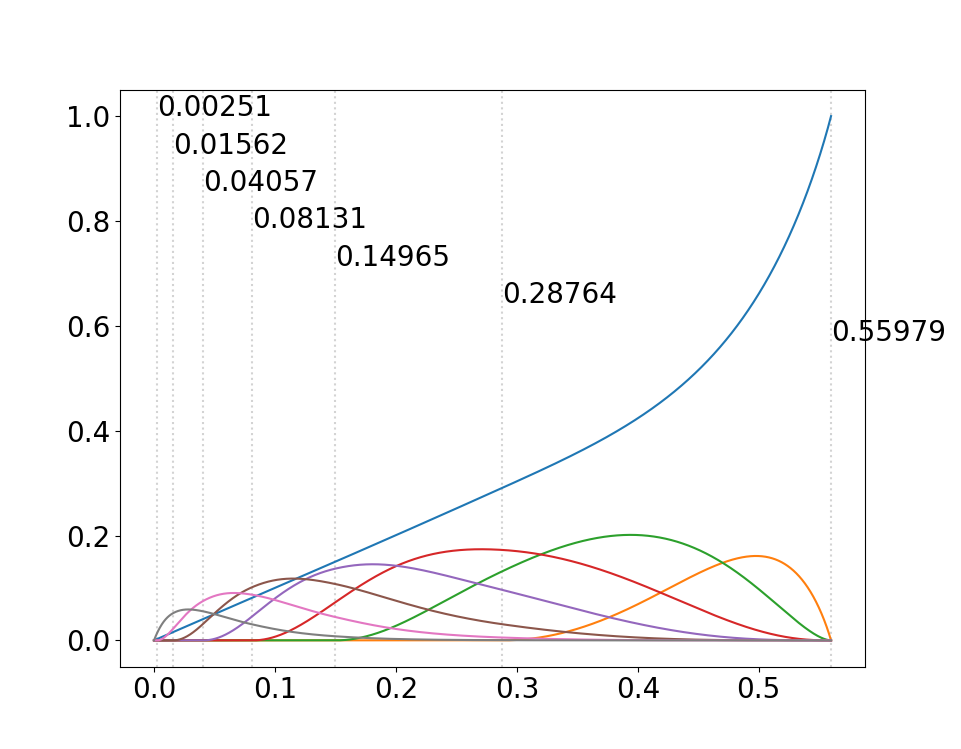} \\
    (e) $d=7$ &
    (f) $d=8$ \\
\end{tabular}\\
\end{centering}
\caption{Solutions to the differential equations for $d=3,4,\ldots 8$.  }
\end{figure}

\begin{figure}[ht] 
\begin{centering}
\begin{tabular}{cc}
  \includegraphics[width=0.4\textwidth]{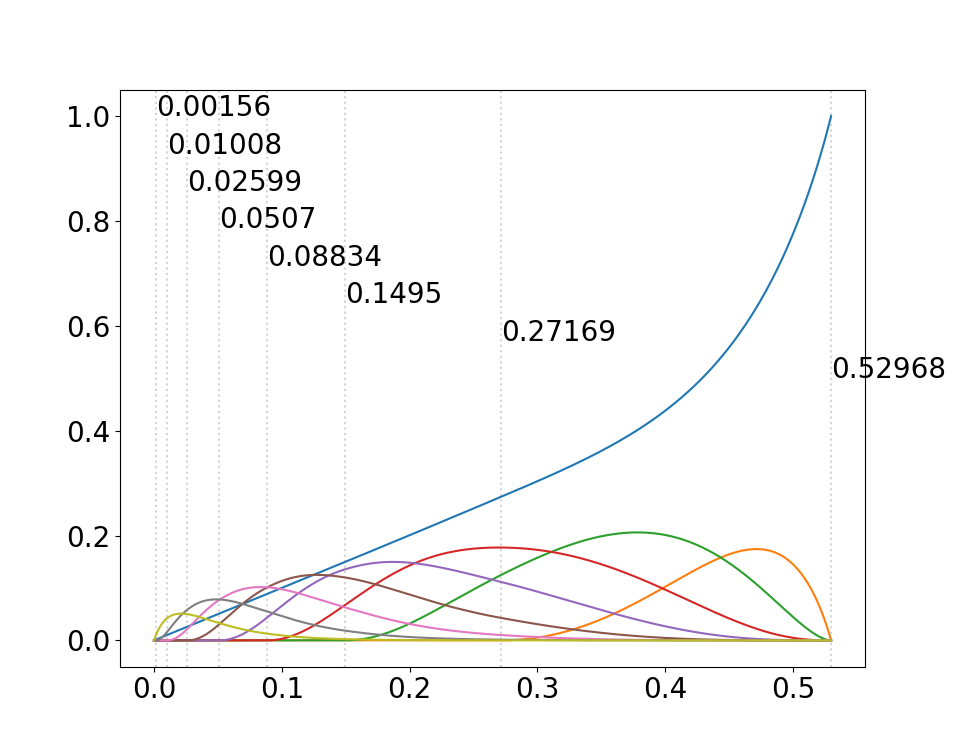} &
  \includegraphics[width=0.4\textwidth]{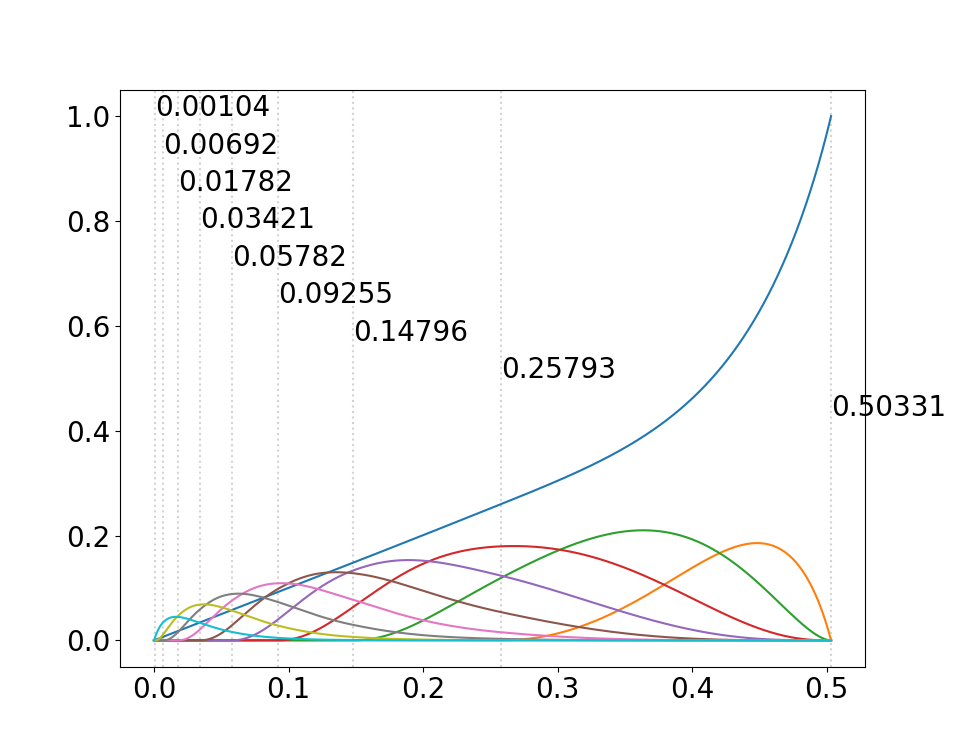} \\
    (g) $d=9$ &
    (h) $d=10$ \\    
  \includegraphics[width=0.4\textwidth]{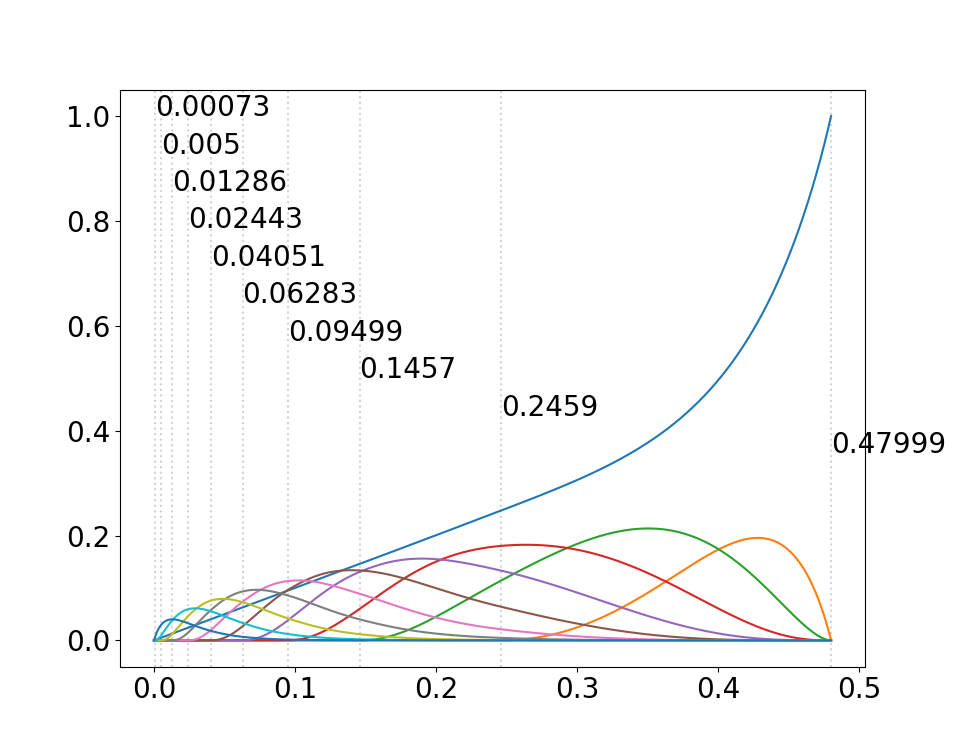} &
  \includegraphics[width=0.4\textwidth]{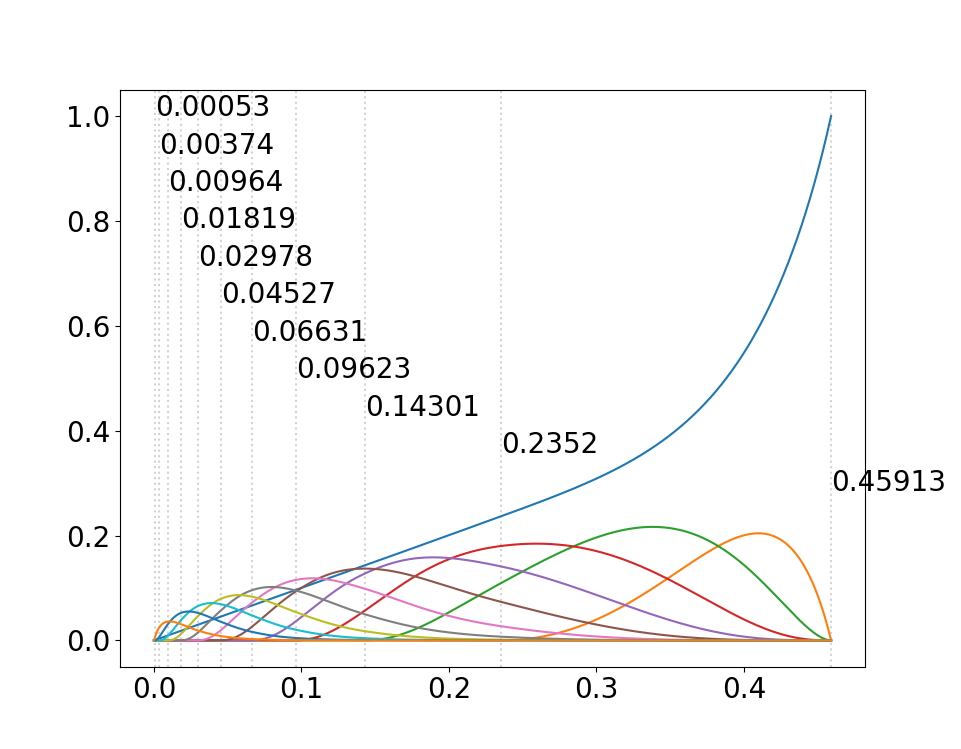} \\
    (i) $d=11$ &
    (j) $d=12$ \\
  \includegraphics[width=0.4\textwidth]{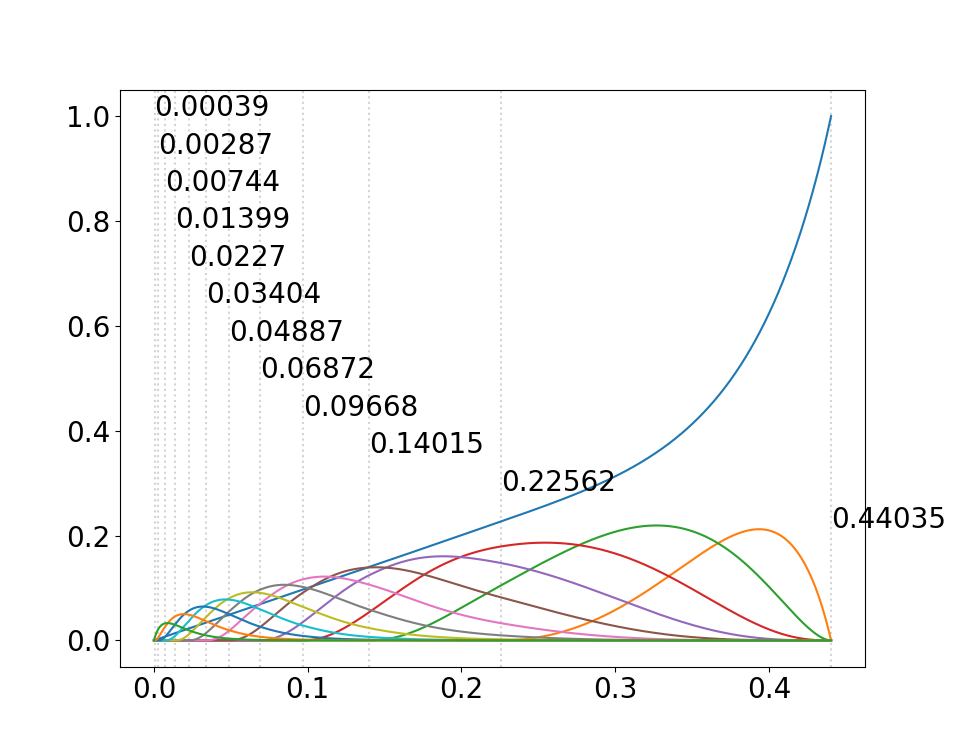} &
  \includegraphics[width=0.4\textwidth]{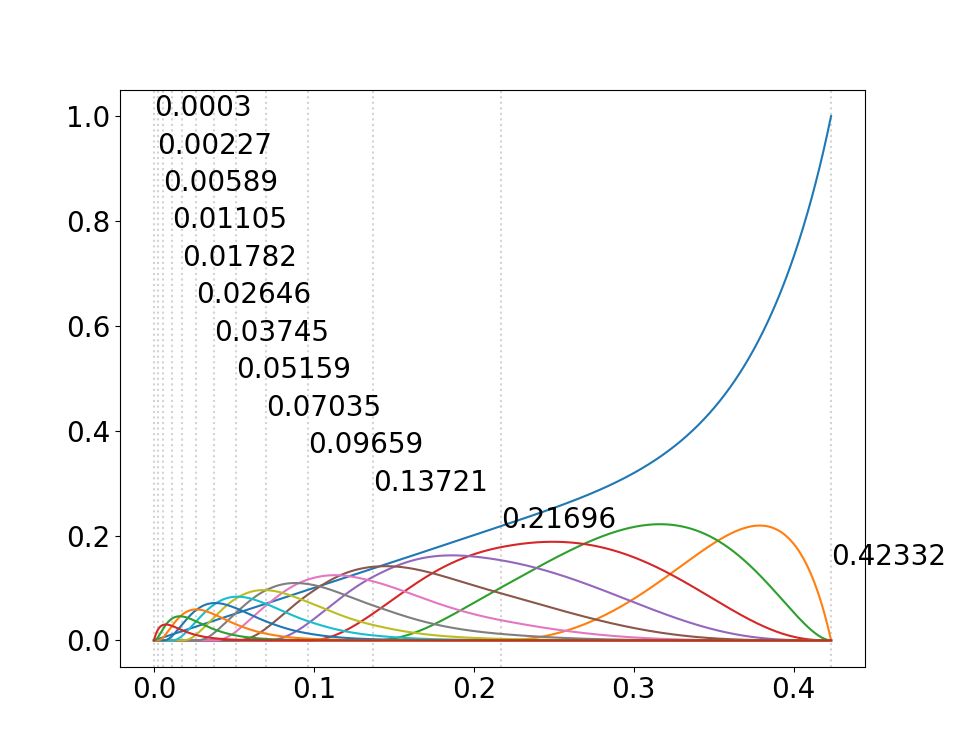} \\
    (k) $d=13$ & 
    (l) $d=14$ \\
\end{tabular}\\
\end{centering}
\caption{Solutions to the differential equations for $d=9,10,\ldots 14$. }
\end{figure}

\end{document}